\title{A Completeness Theorem for Topological Doctrines}
\author{Silvio~Ghilardi and Jérémie~Marquès}
\address{Department of Mathematics, Universit\`a degli Studi di Milano, Italy}
\date{\today}
\begin{document}

\begin{abstract}
    We extend logical categories with fiberwise interior and closure operators so as to obtain an embedding theorem into powers of the category of topological spaces. The required axioms, besides the Kuratowski closure axioms, are  a ``product independence'' and a ``loop contraction'' principle.
\end{abstract}

\maketitle

\section{Introduction}

In this paper, we apply the methods of categorical logic to design a classical first-order logic with additional connectives for the interior and closure operators, which is sound and complete with respect to interpretations in topological spaces.
Whereas the propositional fragment of such a logic has been known for a long time~\cite{MckinseyTarski}, the extension to predicate logic has to face non trivial problems.
The most notable problem is that interior and closure do not commute with inverse image. Instead, they only \emph{semi}-commute: if $f : X\to Y$ is a continuous map between topological spaces, and if $\Diamond A$ represents the closure of $A$, then the inclusion $\Diamond f^{-1}(A) \subseteq f^{-1}(\Diamond A)$ holds, but equality fails in general, already in the case where $f$ is a diagonal function $X\longrightarrow X\times X$. This is especially problematic because inverse images represent substitutions from a logical point of view (in particular, inverse images along diagonals represent variable identifications in a formula).

In order to appropriately handle the above difficulties in designing a symbolic calculus, we switched to an approach based on categorical logic; in fact, categorical logic offers conceptual tools that are particularly illuminating for the context we are considering. At first glance, however, a  categorical approach seems to be problematic too, because  the category $\cTop$ of topological spaces fails to have the standard structure required in order to interpret first order logic: it is not a regular category. This difficulty is handled by replacing the regular-epi/mono factorization system of regular categories by an arbitrary stable factorization system: in  $\cTop$ such a system is given by the surjection/subspace factorizations (which are nothing but epi/regular-mono factorizations). The possibility of adopting stable factorization systems different from the standard regular-epi/mono factorization system traditionally used in categorical logic already appeared in~\cite{GhilardiZawadowski2011} and is fully exploited in
\cite{GM2025}, where the notions of a \emph{modal category} is introduced.

Modal categories (the main framework of this paper) are based on
\emph{f-Boolean} categories: these are ``Boolean categories relative to a stable factorization system,'' namely lex categories endowed with a proper and stable orthogonal factorization system $(\Ec,\Mc)$, whose lattices of $\Mc$-subobjects are Boolean algebras. They can be alternatively introduced as extensional Lawvere doctrines with full comprehension~\cite{MaiRosElementaryQuotientCompletion2013,Pasquali} and they can be turned into Boolean categories just by adding some isomorphisms (in the same way as $\cSet$ can be obtained, up to equivalence, from $\cTop$ by turning bijective continuous functions into iso's, see Section~\ref{sec:syntax-doctrines} below for more details).

A modal category $\cE$ is now simply defined as an f-Boolean category whose $\Mc$-subobject lattices are modal algebras, namely Boolean algebras endowed with a further operator $\Diamond$ commuting with finite joins. Since the $\Diamond$ operator is meant to represent the closure operator and the arrows of 
$\cE$ are meant to represent continuous functions, taking inverse images (i.e., pullbacks) along them only \emph{semi}-preserves $\Diamond$ in a modal category. There are many examples of modal categories beside $\cTop$: for instance, graphs and posets are modal categories. Graphs can be generalized to a simple notion of ``counterpart structure'' giving a complete semantics for modal categories~\cite{GM2025}.
In the current paper, we will see which additional axioms should be added to the notion of a modal category to get completeness relatively to the topological semantics.

In technical terms, we provide a \emph{complete axiomatization} of the modal categories that embed conservatively in a power of the category of topological spaces.
Our axioms include the S4 axiom $S \leq \Diamond S = \Diamond\Diamond S$, the \emph{product independence axiom} \eqref{ax:PI} as well as a \emph{loop contraction axiom} \eqref{ax:LC}.
The product independence axioms
says (in its logical formulation) that $\Diamond$ distributes over conjunctions of formulas not sharing common variables
\[ (\Diamond\varphi)[x] \land (\Diamond\psi)[y] 
\leftrightarrow
\Diamond(\varphi[x] \land \psi[y]) ~~.\]
The loop contraction axiom asserts a continuity condition for certain composable loops of partial maps. This last axiom is only necessary when certain function symbols are to be interpreted as subspace embeddings, because it is otherwise derivable from the rest of the axiomatic basis, see Remark~\ref{rmq:loop-contraction-trivializes} below (but notice that the formula \eqref{ax:LC} plays nevertheless an important role in the completeness proof even when the language is restricted so as  it becomes derivable). 

\paragraph{Related work} Several authors in categorical logic have considered first-order S4 modal logic, for instance \cite{ReyToposTheoreticApproachReference1991,MakReyCompletenessResultsIntuitionistic1995} or \cite{AwoKisTopologyModalityTopological2008}. Modalities not commuting with substitutions were introduced in~\cite{GhiMelModalTensePredicate1988}, where  completeness is proved with respect to presheaves and relational presheaves (the completeness  results in~\cite{GhiMelModalTensePredicate1988} however do not apply to languages/doctrines with sorts representing subspaces). 
In \cite[Cap.~II, \S~7]{GhiModalitaCategorie1990}  interpretations in topological spaces  were considered too: the product independence axiom was introduced and completeness for purely relational languages was proved. These partial results remained unpublished (they were just announced in~\cite{viareggio}); in the present paper, the topological construction from \cite[Cap.~II, \S~7]{GhiModalitaCategorie1990} is improved (in particular by introducing `lax' features in it), so as it becomes powerful enough to prove the completeness result for full languages with the help of the completely novel loop contraction formulas.

The only attempt to axiomatize  first-order S4 modal logic in topological spaces we are aware of is in \cite{SgrInteriorOperatorLogic1980}; actually, \cite{SgrInteriorOperatorLogic1980} allows the interior and closure operators to be applied with respect to a subset of the free variables of a formula, unlike  in the present paper. The axiomatization obtained in \cite{SgrInteriorOperatorLogic1980} 
looks very involved when compared with the neat axiomatization of the present paper  (the relationship and the connections of the two axiomatizations are obscure and a priori not obvious).

\paragraph{Overview of the paper} In \S~\ref{sec:prelim}, we recall the notion of modal category from \cite{GM2025} and some associated terminology. In \S~\ref{sec:top-ax}, we define three axiom schemas and show that they are satisfied in the modal category $\cTop$ of topological spaces. Based on these axioms, we define topological modal categories in \S~\ref{sec:completeness} and we show a completeness result relatively to $\cTop$. In \S~\ref{sec:syntax-doctrines}, we give a more concrete meaning to this result by building the classifying topological modal category of a first-order modal theory. This construction can be done either directly or in two steps: from a theory to a suitable Lawvere doctrine and from a Lawvere doctrine to a suitable logical category. Since the first step is rather standard, we detail here only the second step
(in~\cite[\S 7]{GM2025} we presented a direct construction, but it should be noted that it is \emph{not} the syntactic category of a modal theory for the reasons explained in~\cite[\S 8.1]{GM2025}).

\paragraph{Notations} In a topological space $X$, we denote by $\ovl{A}$ the closure of a subset $A\subseteq X$. Given two objects $X,Y$ in a category with products, we denote by $\pi_X : X \times Y \to X$ the canonical projection. The composite of $f : X \to Y$ and $g : Y \to Z$ in a category will be written $fg$ or $g \circ f$ (often, $f$ and $g$ will be relations or partial maps). The adjective ``lex'' refers to the existence or preservation of finite limits. The adjective ``cartesian'' refers to the existence or preservation of finite products. Given an element $x$ of a poset, the notation ${\downarrow}x$ denotes the set of elements that are below $x$.

\section{Preliminaries}
\label{sec:prelim}

\subsection{Modal categories}\label{subsec:modalcategopries}

We recall in this section some material from \cite{GM2025}. Beware that while the setting of \cite{GM2025} 
does not require the presence of classical negation, we will use only \emph{Boolean} modal logic in the current paper.

We first summarize the motivation for the notions to be recalled. In many of the usual structures of categorical logic, such as Boolean categories \cite[A1.4, p.38]{JohSketchesElephantTopos2002}, the whole logical structure is determined by the bare category without any extra data. For instance, the subobjects coincide with the monomorphisms and conjunctions are realized as pullbacks. However, we should not hope the same thing to be possible for topological logics and more generally for
modal logics. For instance, in many categories with a ``modal'' structure such as graphs or topological spaces, not every monomorphism is an embedding (i.e., a subgraph or a subspace). In general, we must specify \emph{which} monomorphisms are ``embeddings'' and how the modalities act on these subobjects. Since we want to include existential quantification in the internal language, every map must also have an ``image.'' As a consequence, the class of embeddings should be the right class of an orthogonal factorization system. We thus start by relaxing the notion of Boolean category, with the help of a factorization system. More details can be found in \cite{GM2025}.

\begin{definition}
	An \emph{f-Boolean category} is a lex category $\cE$ equipped with an orthogonal factorization system $(\Ec,\Mc)$ such that:
	\begin{itemize}
		\item $\Mc$ is included in the class of monomorphisms and contains every regular monomorphism. Such factorization systems are called \emph{proper}.
		\item $\Ec$ and $\Mc$ are stable under pullbacks (this needs not to be required  for $\Mc$ because it  is automatic). Such factorization systems are called \emph{stable}.
		\item For each $X \in \cE$, the poset of $\Mc$-subobjects is a (small) Boolean algebra. This modal algebra is denoted by $\Sub_\cE(X)$ or simply $\Sub(X)$.
		\item For each arrow $f : X\to Y$ in $\Ec$, the pullback map $f^* : \Sub_\cE(Y) \to \Sub_\cE(X)$ is a morphism of Boolean algebras.
	\end{itemize}
	A morphism of f-Boolean categories is a lex functor which preserves the factorization system and the join of $\Mc$-subobjects.
\end{definition}

\paragraph{Notations} If $\cE$ is an f-Boolean category, the arrows in $\Mc$ are called \emph{embeddings} and the arrows in $\Ec$ are called \emph{surjections}. The isomorphism classes of embeddings into $X$ are called the \emph{subobjects} of $X$. Since this convention is not standard in category theory where a ``subobject'' is understood as a monomorphism, we justify our choice by remarking that in general, the notion of subobject depends on the situation. For instance, a subspace of a topological space $X$ is not a monomorphism into $X$. We will also write $A \subseteq X$ instead of $A \in \Sub(X)$. If $f : X\to Y$ is a morphism in $\cE$, its \emph{image} is the subobject of $Y$ obtained by factorizing $f$ according to the factorization system. If $A \subseteq X$, we denote by $\exists_f(A) \subseteq Y$ the image of the composite $A \hookrightarrow X \to Y$. The map $\exists_f : \Sub(X) \to \Sub(Y)$ is left adjoint to $f^* : \Sub(Y) \to \Sub(X)$. The pullback of a subobject $A$ by some morphism $f$ will be written $f^* A$ or $f^{-1} A$. The direct image will be written $\exists_f A$ or $f A$. For more details, we refer to \cite[Sec.~2]{GM2025}. These conventions allow us to leave $\Ec$ and $\Mc$ implicit, lightening the notation.

The next step is to add modalities. A \emph{modal algebra} is a Boolean algebra $M$ equipped with an operator $\Diamond : M\to M$ preserving finite joins, or equivalently an operator $\Box : M\to M$ preserving finite meets with $\Box = \lnot\Diamond\lnot$. A \emph{lax morphism} between two modal algebras $M$ and $N$ is a morphism $f : M\to N$ of Boolean algebras such that $\Diamond f(x) \leq f(\Diamond x)$.

\begin{example}
	If $X$ is a topological space, then $\Ps(X)$ is a modal algebra with $\Diamond A = \ovl{A}$ the closure of $A$. If $f : X\to Y$ is a continuous map, then $f^{-1} : \Ps(Y) \to \Ps(X)$ is a lax morphism.
\end{example}

\begin{definition}\label{dfn:mod-cat}
	A \emph{Boolean modal category} is an f-Boolean category $\cE$ such that:
	\begin{itemize}
		\item $\Sub(X)$ has a structure of modal algebra for each $X \in \cE$.
		\item For each morphism $f : X\to Y$, the map $f^* : \Sub(Y)\to\Sub(X)$ is a lax morphism:
		\[ \Diamond f^* A \leq f^* \Diamond A \text{.} \]
		\item For each embedding $m : X\hookrightarrow Y$ and for each $A \subseteq X$:
		\[ \Diamond A = m^* \Diamond \exists_m A \text{.} \]
	\end{itemize}
	A morphism of Boolean modal categories is a morphism of the underlying f-Boolean categories which commutes with the modalities.
\end{definition}

Since we will only consider Boolean modal categories in this paper, we will simply write ``modal category'' instead of ``Boolean modal category.''

\begin{example}
	The category $\cTop$ of topological spaces is a modal category, with the factorization system of surjections and embeddings 
    (which are just epis and regular monos).
    The modality $\Diamond$ is the closure operator and $\Box$ is the interior operator.
\end{example}

\paragraph{Partial maps} In a modal category, a \emph{partial map} $f : X\relto Y$ is given by a subobject $d_f : \dom(f) \hookrightarrow X$, and a morphism $t_f : \dom(f)\to Y$. Partial maps can be composed by using that subobjects are stable under pullbacks. The direct and reciprocal images of $A$ under $f$ are given by the formulas $f^{-1} A = d_f t_f^{-1} A$ and $f A = t_f d_f^{-1} A$. The \emph{graph} of $f$ is $(d_f,t_f) : \dom(f) \to X\times Y$.

\begin{lemma}
	The graph of a partial map $f$ is an embedding.
\end{lemma}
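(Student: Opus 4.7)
The plan is to exhibit the graph $(d_f,t_f):\dom(f)\to X\times Y$ as the composite of two arrows, each of which manifestly belongs to $\Mc$, and then use closure of $\Mc$ under composition. Explicitly, I would factor the graph as
\[
\dom(f)\ \xrightarrow{(1_{\dom(f)},\,t_f)}\ \dom(f)\times Y\ \xrightarrow{d_f\times 1_Y}\ X\times Y,
\]
so it suffices to argue that each of the two displayed arrows is an embedding.

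For the first arrow, $(1_{\dom(f)},t_f)$ is the graph of the (total) morphism $t_f$ inside the lex category $\cE$. A routine lex-category computation shows that the graph of any morphism $g:A\to B$ equalises the pair $g\circ\pi_A,\ \pi_B:A\times B\to B$; in particular it is a regular monomorphism. Since $(\Ec,\Mc)$ is \emph{proper}, every regular mono lies in $\Mc$, so $(1_{\dom(f)},t_f)\in\Mc$.

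For the second arrow, $d_f\times 1_Y$ is the pullback of the embedding $d_f:\dom(f)\hookrightarrow X$ along the projection $\pi_X:X\times Y\to X$. Stability of $\Mc$ under pullback (automatic, as noted in the definition of an f-Boolean category) gives $d_f\times 1_Y\in\Mc$. The right class of any orthogonal factorization system is closed under composition, so $(d_f,t_f)=(d_f\times 1_Y)\circ(1_{\dom(f)},t_f)\in\Mc$, i.e., the graph is an embedding.

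I do not anticipate any real obstacle: the only non-tautological input is that the graph of a morphism in a lex category is an equalizer, and this is a standard textbook fact. The properness of the factorization system is then used exactly to pass from ``regular mono'' to ``member of $\Mc$''—this is precisely why properness was built into the definition of an f-Boolean category.
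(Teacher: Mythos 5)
Your proposal is correct and follows exactly the paper's argument: the same factorization through $\dom(f)\times Y$, with the first leg an equalizer (hence in $\Mc$ by properness) and the second a pullback of $d_f$ along $\pi_X$, composed using closure of $\Mc$ under composition. No discrepancies worth noting.
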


\begin{proof}
	The map $(\id,t_f) : \dom(f) \to \dom(f)\times Y$ is an embedding since it is the equalizer of $f\circ\pi_{\dom(f)}$ and $\pi_Y$. The map $d_f\times Y : \dom(f)\times Y \to X\times Y$ is also an embedding since it is the pullback of $d_f$ along $\pi_X : X\times Y\to X$. The composite of $(\id,t_f)$ and $d_f\times Y$ is $(d_t,t_f)$, so it is an embedding.
\end{proof}

By virtue of the previous lemma, a partial map can also be thought of as a special relation,
i.e. a subobject of a cartesian product 
$R \subseteq X\times Y$. The composition of partial maps coincides with the composition of relations. For this reason we will not distinguish a partial map and its graph. In general, when $R \subseteq X\times Y$ is a relation, we denote by $R A = \pi_Y[R \land \pi_X^{-1} A]$ the direct image of $A \subseteq X$ under $R$ and we denote by $R^{-1} \subseteq Y\times X$ the reciprocal relation. This is consistent with the notations $f A$ and $f^{-1} A$ for direct and reciprocal image under a partial map $f$.

When $f$ is a total map, it satisfies a continuity axiom that can be expressed in many ways such as $f \Diamond A \leq \Diamond f A$ or $f \Diamond f^{-1} A \leq \Diamond A$, thanks to the adjunction relation $f[{-}] \dashv f^{-1}({-})$. When $f$ is only a partial map, this adjunction relation does not hold and the correct formulation of continuity becomes
\begin{equation*}
	f \Diamond f^{-1} A \leq \Diamond A \text{.}
\end{equation*}
Indeed, this equation can be obtained by combining the continuity axiom for $t_f$ (i.e., $t_f \Diamond t_f^{-1} B \leq \Diamond B$) and the embedding axiom for $d_f$ (i.e., $d_f^{-1} \Diamond d_f B \leq \Diamond B$).

\begin{remark}
	More generally, a relation $R \subseteq X\times Y$ is continuous when it satisfies $R \Diamond R^{-1} A \leq \Diamond A$. The completeness result presented here seems to generalize straightforwardly when partial maps are replaced by continuous relations, but we leave that aside in this paper.
\end{remark}

\subsection{Topological semantics}

In the same way that a small Boolean category encodes a Boolean first-order theory, a small modal category encodes a kind of modal first-order theory
(see \cite{GM2025} for details). If $\cC$ is a small Boolean category, the models of the associated theory are the coherent functors $\cC \to \cSet$. As already mentioned, several semantic categories can play the role of $\cSet$ for modal logic: graphs, topological spaces, posets... For instance:
\begin{definition}\label{dfn:top-mod}
	A \emph{topological model} of a modal category $\cC$ is a modal functor $\cC \to \cTop$.
\end{definition}

However, none of the aforementioned semantic categories provide a complete semantics for small modal categories. Indeed, they satisfy additional axioms not required by Definition~\ref{dfn:mod-cat}. In topological spaces and posets, the S4 axiom $A \leq \Diamond A = \Diamond\Diamond A$ holds. In graphs, topological spaces and posets, the product independence axiom \eqref{ax:PI}, to be introduce later, holds. Nonetheless, the interested reader can find a complete semantics for small modal categories in \cite{GM2025}.

In this paper, we focus on the \emph{topological} semantics of Definition~\ref{dfn:top-mod}. We will see which additional axioms should be added to modal categories so that this semantics becomes complete (and stays sound). To formulate that more precisely, we introduce some terminology.

A modal functor $F : \cE \to \cC$ is called \emph{conservative} if $\Sub_\cE(X) \to \Sub_\cC(F(X))$ is an embedding for each $X \in \cE$. In logical terms, the idea is that the theory of $\cC$ is a conservative extension of the theory of $\cE$. Conservative modal functors are always faithful. More generally, a potentially large family $(F_i : \cE \to \cC_i)_i$ of modal functors is \emph{jointly conservative} whenever $\Sub_\cE(X) \to \prod_i \Sub_{\cC_i}(F_i(X))$ is an embedding for each $X \in \cE$ (the lattice $\prod_i \Sub_{\cC_i}(F_i(X))$ is large if the family is large but this is not a problem).

\paragraph{Main result} The main result of this paper (Theorem~\ref{thm:completeness-top}) is a characterization of the small modal categories $\cE$ such that the family of modal functors $\cE \to \cTop$ is jointly conservative. Equivalently, since $\cE$ contains only a set of embeddings, this means that there is a conservative functor to a power of $\cTop$ indexed by a set.

Just like in \cite{GM2025}, the completeness result of this paper is shown using the models of the underlying f-Boolean category:

\begin{definition}
	A \emph{Boolean model} of a modal category $\cE$ is an f-Boolean functor $\cE\to\cSet$.
\end{definition}

\paragraph{Notations} Given a Boolean model $M : \cE \to \cSet$, we will often write $\llbracket\varphi\rrbracket_M$ instead of $M(\varphi)$ when $\varphi \subseteq X$ is a subobject in $\cE$. When $x \in M(X)$, we will write $x \vDash \varphi$ to mean that 
$x \in \llbracket\varphi\rrbracket_M=M(\varphi) \subseteq M(X)$.

\section{The topological axioms}
\label{sec:top-ax}

In this section, we introduce three axiom schemas and show that they are satisfied in $\cTop$: the closure axiom \eqref{ax:S4}, the product independence axiom \eqref{ax:PI} in \S~\ref{subsec:product-indep}, and the loop contraction axiom \eqref{ax:LC} in \S~\ref{subsec:loop-contract}. The modal categories satisfying these axioms will be called \emph{topological} and shown to embed conservatively in a power of $\cTop$.

The product independence and the loop contraction axioms are easier to understand when all the spaces involved are Alexandroff. We encourage the reader to consider this special case, noting that the full modal subcategory of $\cTop$ spanned by the Alexandroff spaces is equivalent to the modal category of posets, where $\Diamond$ is the downward closure operator.

First of all, the S4 axiom
\begin{equation}\label{ax:S4}
	S \leq \Diamond S = \Diamond\Diamond S \tag{S4}
\end{equation}
simply expresses that $\Diamond$ is a closure operator, as is indeed the case in $\cTop$.

\subsection{Product independence}
\label{subsec:product-indep}

Let $\cE$ be a modal category and let $X, Y$ be two objects of $\cE$. The \emph{product independence axiom} states that for all $A \subseteq X$ and $B \subseteq Y$,
\begin{equation}\label{ax:PI}
	(\pi_X^{-1} \Diamond A) \land (\pi_Y^{-1} \Diamond B) \leq \Diamond (\pi_X^{-1} A \land \pi_Y^{-1} B) \text{.} \tag{PI}
\end{equation}

Note that the reverse inequality holds in any modal category.

\begin{lemma}\label{lem:PI-valid}
	The axiom \eqref{ax:PI} holds in $\cTop$.
\end{lemma}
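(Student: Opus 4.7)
The plan is to unfold the axiom in the concrete setting of $\cTop$. Translating the pullbacks: $\pi_X^{-1}A = A \times Y$, $\pi_Y^{-1}B = X \times B$, and similarly for their closures, so that their conjunction on the left-hand side becomes $\overline{A} \times \overline{B}$, while $\pi_X^{-1}A \land \pi_Y^{-1}B = A \times B$. Hence the statement to prove reduces to the classical inclusion
\[ \overline{A} \times \overline{B} \subseteq \overline{A \times B} \]
inside $X \times Y$ equipped with the product topology.

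To prove this inclusion I would argue pointwise. Fix $(x,y) \in \overline{A} \times \overline{B}$ and let $W$ be an arbitrary open neighborhood of $(x,y)$ in $X\times Y$. By definition of the product topology, $W$ contains a basic open neighborhood of the form $U \times V$ with $U$ open in $X$, $V$ open in $Y$, $x \in U$ and $y \in V$. Since $x \in \overline{A}$, the neighborhood $U$ meets $A$, say at some $a \in U \cap A$; similarly $V \cap B$ contains some $b$. Then $(a,b) \in (U \times V) \cap (A \times B) \subseteq W \cap (A\times B)$, so every neighborhood of $(x,y)$ meets $A \times B$, which means $(x,y) \in \overline{A \times B}$.

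There is no real obstacle here: the only subtle point is the correct translation from the abstract formula involving pullbacks along projections to the concrete product-topology statement, after which the argument is the textbook fact that closures distribute over finite products of subsets (one inclusion being $\overline{A}\times\overline{B}\subseteq\overline{A\times B}$, the other one, which is just the reverse and would give equality, being guaranteed by continuity of the projections and not needed here). The remark immediately before the lemma already notes that the reverse inequality holds in any modal category, so only the displayed inclusion needs verification.
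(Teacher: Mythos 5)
Your proof is correct and takes essentially the same route as the paper's: both reduce the axiom to the concrete statement about closures in the product topology and verify it by testing basic open rectangles $U\times V$ against $A\times B$. The only cosmetic difference is that the paper proves the equality $\overline{A\times B}=\overline{A}\times\overline{B}$ in one ``iff'' chain, whereas you prove only the nontrivial inclusion $\overline{A}\times\overline{B}\subseteq\overline{A\times B}$ and correctly defer the reverse one to the general modal-category fact noted just before the lemma.
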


\begin{proof}
	Let $A$ and $B$ be two topological spaces. Let $\varphi \subseteq A$ and let $\psi \subseteq B$. We need to show that $\ovl{\varphi\times\psi} = \ovl{\varphi}\times\ovl{\psi}$, where $\ovl{\theta}$ is the closure of $\theta$. Let $(a,b) \in \ovl{\varphi\times\psi}$. It means that for all open $U \subseteq A$ and all open $V \subseteq B$, we have $(U\times V) \cap (\varphi\times\psi) \neq \emptyset$, or in other words that $U \cap \varphi \neq \emptyset$ and that $V \cap \psi \neq \emptyset$. So it is equivalent to $a \in \ovl{\varphi}$ and $b \in \ovl{\psi}$.
\end{proof}

\begin{remark}\label{rmq:proj-open}
	The axiom \eqref{ax:PI}, in conjunction with the axiom \eqref{ax:S4}, implies in particular that projections are ``open,'' meaning that reindexing along them commutes with the modalities: $\pi_X^{-1} \Diamond A = \Diamond \pi_X^{-1} A$. To see that, take $B=\top$ and use that $\top \leq \Diamond\top$. This property is useful to translate categorical notation to logic notation, where reindexing along projections is omitted.
    (But beware that reindexation along diagonals, which corresponds to duplicating a variable in the syntax of first-order logic, still does not commute with the modalities.)
\end{remark}

\subsection{Loop contraction}
\label{subsec:loop-contract}

As we have seen, the correct continuity axiom for a \emph{partial} map $f : X \relto Y$ is
\begin{equation*}
	f \Diamond f^{-1} A \leq \Diamond A \text{.}
\end{equation*}
We will need a more general axiom schema involving several partial maps. When only total maps are involved, we will see in Remark~\ref{rmq:loop-contraction-trivializes} that these axioms trivialize thanks to the equivalence of $f \Diamond f^{-1} A \leq \Diamond A$ and $f \Diamond A \leq \Diamond f A$.

In what follows, a \emph{loop} in a modal category is a sequence of binary relations $R_1,{\ldots},R_n$ such that the composite $R_1 \circ \cdots \circ R_n$ makes sense 
(i.e. is type-matching)
and such that the codomain of $R_1$ coincides with the domain of $R_n$. The object which is both the codomain of $R_1$ and the domain of $R_n$ is called the \emph{anchor} of the loop. To each loop $R_1,{\ldots},R_n$, we associate the following ``loop contraction'' condition, where $A$ ranges over the subobjects of the anchor:
\begin{equation}\label{ax:LC}
    \Diamond R_1 \Diamond R_2 \Diamond \cdots \Diamond R_n A \leq \Diamond A \text{.} \tag{LC}
\end{equation}
Note that the first $\Diamond$ can be removed in \eqref{ax:LC} without affecting the statement, thanks to axiom \eqref{ax:S4}. As a consequence, $f \Diamond f^{-1} A \leq \Diamond A$ is indeed a special case of \eqref{ax:LC}.

We define by induction the \emph{acceptable} loops:
\begin{enumerate}[label=(A\arabic*)]
	\item the empty loop is acceptable (on any anchor), \label{acc:1}
	\item identities can be added anywhere in acceptable loops to produce new acceptable loops,\label{acc:2}
	\item if $w$ and $w'$ are acceptable, then $w,w'$ is acceptable,\label{acc:3}
	\item if $w$ is acceptable and if $f$ is a partial map, then $f,w,f^{-1}$ is acceptable,\label{acc:4}
	\item if $R_1,{\ldots},R_n$ and $S_1,{\ldots},S_n$ are acceptable, then $R_1\times S_1,{\ldots},R_n\times S_n$ is acceptable. \label{acc:5}
\end{enumerate}

\begin{example}
	If $f_1 : X_1\relto Y_1$ and $f_2 : X_2\relto Y_2$ are partial morphisms, then
	\[ f_1\times Y_2, X_1\times f_2, f_1^{-1}\times X_2, Y_1\times f_2^{-1} \]
	is an acceptable loop whose anchor is $Y_1\times Y_2$.
\end{example}

\begin{proposition}\label{prop:PE-valid}
	The axiom \eqref{ax:LC} holds for all the acceptable loops in $\cTop$.
\end{proposition}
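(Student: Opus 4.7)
The plan is to prove the proposition by induction on the derivation of the loop's acceptability, treating each clause (A1)--(A5) in turn. Cases (A1)--(A4) are essentially formal and work in any modal category satisfying \eqref{ax:S4} and the partial-map continuity $f\Diamond f^{-1}A\leq\Diamond A$: (A1) is precisely \eqref{ax:S4}; for (A2), an inserted $\Diamond\id\Diamond$ collapses to $\Diamond$ via $\Diamond\Diamond=\Diamond$; for a concatenation (A3) $w,w'$ one applies the inductive hypothesis first to $w'$ to replace the tail $\Diamond S_1\cdots\Diamond S_m A$ by $\Diamond A$ and then to $w$ with $\Diamond A$ as anchor subobject; for a conjugation (A4) $f,w,f^{-1}$ with $f$ partial, setting $B=\Diamond f^{-1}A$ and applying the inductive hypothesis for $w$ yields $\Diamond R_1\cdots\Diamond R_k B\leq B$, after which the partial-map continuity $\Diamond f\Diamond f^{-1}A\leq\Diamond A$ finishes the job.

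Case (A5), the product, is the main challenge and is where \eqref{ax:PI} genuinely enters. Given acceptable $R_{*}=R_1,\ldots,R_n$ on $X$ and $S_{*}=S_1,\ldots,S_n$ on $Y$ for which the inductive hypothesis is already in force, I would first use the factorization $R_i\times S_i=(R_i\times\id_Y)\circ(\id_X\times S_i)$ together with the general inequality $\Diamond(PQ)A\leq\Diamond P\Diamond Q A$ (a consequence of \eqref{ax:S4}) to bound the product loop by the interleaved expression
\[
 \Diamond(R_1\times\id)\Diamond(\id\times S_1)\cdots\Diamond(R_n\times\id)\Diamond(\id\times S_n)A .
\]
A commutation lemma specific to $\cTop$---the operators $\Diamond(R\times\id_Y)$ and $\Diamond(\id_X\times S)$ commute on $\Ps(X\times Y)$ for arbitrary relations $R,S$---then lets me slide all $\Diamond(\id\times S_i)$ past the $\Diamond(R_j\times\id)$, yielding $\Diamond(R_1\times\id)\cdots\Diamond(R_n\times\id)\Diamond(\id\times S_1)\cdots\Diamond(\id\times S_n)A$. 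Finally, a parameterization lemma (if $w$ is an acceptable loop on $Z$ satisfying \eqref{ax:LC}, then $w\times\id_W$ satisfies \eqref{ax:LC} on $Z\times W$ for any $W$) reduces each block to $\Diamond A$ using the inductive hypotheses for $R_{*}$ and $S_{*}$, after which the intermediate $\Diamond$'s are absorbed.

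The hard part is the commutation lemma, and it is exactly where \eqref{ax:PI} enters. I would prove it by unpacking closure in the product topology: a point $(x,y)$ lies in either composite iff for every open $U\ni x$ and $V\ni y$ there is $(a,b)\in A$ with $R(a)\cap U\neq\emptyset$ and $S(b)\cap V\neq\emptyset$, a condition manifestly symmetric in the two operators because basic open neighborhoods in the product topology are rectangles---this rectangular structure being the topological avatar of \eqref{ax:PI}. The parameterization lemma itself is proved by a parallel induction on the derivation of $w$'s acceptability, with its (A5) subcase handled via associativity of Cartesian product; potential circularity with (A5) is avoided by organizing the whole argument as a single induction on the combined derivation complexity of the outermost acceptable loop, so that every recursive appeal is to a strictly smaller instance.
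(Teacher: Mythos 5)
Your handling of \ref{acc:1}--\ref{acc:4} is fine and agrees with the paper, which likewise treats these cases as formal consequences of \eqref{ax:S4} and the partial-map continuity inequality, valid in any modal category. The gap is in case \ref{acc:5}: the commutation lemma on which the whole argument rests is false, even for the relations that actually occur in acceptable loops. The elementary swap your interleaving needs is $\Diamond(\id\times S)\Diamond(R\times\id)C \leq \Diamond(R\times\id)\Diamond(\id\times S)C$. Already for $R=\id_X$ this reads $\Diamond(\id\times S)\Diamond C \leq \Diamond(\id\times S)C$, which fails for a genuinely partial map $S$: take $X=Y=\mathbf{R}$ (the real line), let $S$ be the partial identity with domain $\{0\}$ (so $S=\{(0,0)\}$, a legitimate link of an acceptable loop), and let $C=\{(0,1/n): n\geq 1\}$. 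Then $\Diamond C = C\cup\{(0,0)\}$, so the left-hand side is $\{(0,0)\}$, while $(\id\times S)C=\emptyset$ and the right-hand side is empty. Exchanging the roles of the two coordinates kills the opposite inequality as well, so the failure cannot be repaired by reorganizing the swaps. The error is already visible in your justification: the pointwise condition you write down (``for every open $U\ni x$ and $V\ni y$ there is $(a,b)\in A$ with $R(a)\cap U\neq\emptyset$ and $S(b)\cap V\neq\emptyset$'') describes the single closure $\Diamond((R\times S)A)$ and silently discards the \emph{intermediate} closures; but the non-commutation of closure with images under relations and partial maps is exactly the phenomenon that makes \eqref{ax:LC} non-trivial in the first place.

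The paper's proof of case \ref{acc:5} avoids any commutation. Writing $I\oplus J$ for $\pi_X^{-1}I\lor\pi_Y^{-1}J$, it establishes $\Diamond(R_1\times S_1)\cdots\Diamond(R_n\times S_n)(I\oplus J)\leq(\Diamond R_1\cdots\Diamond R_n I)\oplus(\Diamond S_1\cdots\Diamond S_n J)$ from three facts (projections are open, $(R\times S)\pi_V^{-1}I\leq\pi_{V'}^{-1}RI$, and the composite operator preserves finite unions), and then concludes using the identity $\Diamond A=\bigcap_{A\subseteq I\oplus J}(\Diamond I\oplus\Diamond J)$ valid in the product topology. That intersection formula, not a commutation of the two one-sided operators, is where the rectangular basis of the product topology (the topological content of \eqref{ax:PI}) enters. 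To salvage your outline you would need to replace the commutation lemma by an argument of this kind; as written, the proof does not go through.
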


\begin{proof}
	We will show that the loops $R_1,{\ldots},R_n$ in $\cTop$ such that \eqref{ax:LC} holds are closed under \ref{acc:1}--\ref{acc:5}. It is quite direct for \ref{acc:1}--\ref{acc:4}, as it is a general fact of modal categories. In the rest of the proof, we check the stability under \ref{acc:5}. Let $R_1,{\ldots},R_n$ and $S_1,{\ldots},S_n$ be two loops in $\cTop$ with respective anchors $X$ and $Y$. Suppose that \eqref{ax:LC} holds for both of these loops. We need three observations:
	\begin{itemize}
		\item If $\pi_V : V\times W\to V$ is a product projection, then $\pi_V^{-1} \Diamond I = \Diamond \pi_V^{-1} I$ for any $I \subseteq V$.
		\item If $R \subseteq V\times V'$ and $S \subseteq W\times W'$, then $(R\times S) \pi_V^{-1} I \leq \pi_{V'}^{-1} R I$ for any $I \subseteq V$.
		\item The map $A \subseteq X\times Y \mapsto \Diamond(R_1\times S_1)\cdots\Diamond(R_n\times S_n) A$ preserves finite unions.
	\end{itemize}
	Given $I \subseteq X$ and $J \subseteq Y$, we write $I \oplus J$ instead of $\pi_X^{-1}(I) \lor \pi_Y^{-1}(J)$. Using the three facts cited above, we obtain the following formula:
	\[ \Diamond (R_1\times S_1) \cdots \Diamond (R_n\times S_n) (I\oplus J) \leq (\Diamond R_1 \cdots \Diamond R_n I) \oplus (\Diamond S_1 \cdots \Diamond S_n J) \text{.} \]
	By definition of the product topology on $X\times Y$, we have
	\[ \Diamond A = \bigcap_{A\subseteq I\oplus J} \Diamond I \oplus \Diamond J \]
	for all $A \subseteq X\times Y$. We obtain for all $A \subseteq X\times Y$ the following chain of inclusions:
	\begin{align*}
		\Diamond A &= \bigcap_{A\subseteq I\oplus J} \Diamond I \oplus \Diamond J\\
		&\geq \bigcap_{A\subseteq I\oplus J} (\Diamond R_1 \cdots \Diamond R_n I) \oplus (\Diamond S_1 \cdots \Diamond S_n J) \\
		&\geq \bigcap_{A\subseteq I\oplus J} \Diamond (R_1\times S_1) \cdots \Diamond (R_n\times S_n) (I\oplus J) \\
		&\geq \Diamond (R_1\times S_1) \cdots \Diamond (R_n\times S_n) A\text{.}
	\end{align*}
	This concludes the proof that \eqref{ax:LC} holds for $R_1\times S_1,{\ldots},R_n\times S_n$.
\end{proof}

\begin{remark}\label{rmq:loop-contraction-trivializes}
	When all the maps involved in an acceptable loop are total, the associated loop contraction axiom \eqref{ax:LC} is in fact deducible from the simple continuity axiom $f\Diamond A \leq \Diamond fA$ where $f$ is a \emph{total} maps. We will write the proof using a trick to facilitate its formal description, but the underlying idea is that of rewriting systems. An \emph{auxiliary sequence} of a loop $R_1,{\ldots},R_n$ is a sequence $G_0,G_1,{\ldots},G_n$ whose terms are graphs of \emph{total} maps such that $G_k \circ R_{k+1}\leq G_{k+1}$ for all $0\leq k\leq n$, and $G_0=G_n=\Id$, where $\Id$ is the graph of the identity of the anchor. We show that every acceptable loop in which only total maps appear has an auxiliary sequence by induction on the construction rules \ref{acc:1}--\ref{acc:5}. For \ref{acc:1}, this is simply the one-element sequence $\Id$. For \ref{acc:2}, we duplicate one of the $G_k$ depending on where an identity has been added. For \ref{acc:3}, we concatenate the two auxiliary sequences, merging the final element of the first with the initial element of the second (they are both the graph of the identity). For \ref{acc:4}, if $G_0,G_1,{\ldots},G_n$ is an auxiliary sequence of $R_1,{\ldots},R_n$ and if $f$ is a total map, then $\Id,f\circ G_0,f\circ G_1,{\ldots},f\circ G_n,\Id$ is an auxiliary sequence of $f,R_1,{\ldots},R_n,f^{-1}$, using that $f\circ f^{-1} \leq \Id$. For \ref{acc:5}, we take the pointwise product of the auxiliary sequences. To conclude, we show that if $R_1,{\ldots},R_n$ has an auxiliary sequence $G_1,{\ldots},G_n$, then the formula
	\[ \Diamond R_1 \cdots \Diamond R_n A \leq \Diamond A \]
	can be derived from the continuity axiom $f \Diamond A \leq \Diamond f A$ where $f$ is a \emph{total} map:
	\begin{align*}
		\Diamond R_1 \cdots \Diamond R_n A &= \Diamond G_0 R_1 \Diamond R_2 \cdots \Diamond R_n A \\
		&\leq \Diamond G_1 \Diamond R_2 \cdots \Diamond R_n A \\
		&\leq \Diamond \Diamond G_1 R_2 \cdots \Diamond R_n A \tag{by the continuity axiom applied to $G_1$} \\
		&\leq \Diamond \Diamond G_2 \cdots \Diamond R_n A \\
		&\leq \cdots \leq \Diamond^n G_n A \leq \Diamond A\text{.}
	\end{align*}
\end{remark}

\section{Topological modal categories}
\label{sec:completeness}

\begin{definition}
	A modal category $\cE$ is called \emph{topological} when it satisfies the axiom \eqref{ax:S4}, the product independence axiom \eqref{ax:PI}, and the loop contraction axiom \eqref{ax:LC} for all the acceptable loops in $\cE$.
\end{definition}

We can now state the main result of this paper:

\begin{theorem}\label{thm:completeness-top}
	A small modal category is topological if and only if it admits a conservative modal functor to a power of $\cTop$.
\end{theorem}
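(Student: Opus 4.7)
The ``only if'' direction is routine: any modal functor to $\cTop^I$ preserves the categorical data (finite limits, the factorization system, and $\Diamond$) involved in formulating \eqref{ax:S4}, \eqref{ax:PI}, and \eqref{ax:LC}, and these axioms hold componentwise in $\cTop^I$ because they hold in $\cTop$ by Lemma~\ref{lem:PI-valid} and Proposition~\ref{prop:PE-valid}. A conservative modal functor reflects subobject inequalities, so the validity of the axioms transfers back to $\cE$.

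For the ``if'' direction, since $\cE$ is small, the collection of triples $(X,A,B)$ with $A, B \in \Sub_\cE(X)$ and $A \not\leq B$ forms a set $I$, and it suffices to construct, for each such triple, a modal functor $F : \cE \to \cTop$ with $F(A) \not\subseteq F(B)$; the induced functor $\cE \to \cTop^I$ will then be conservative. The first step is to invoke the Boolean completeness theorem for f-Boolean categories from \cite{GM2025} to obtain an f-Boolean model $M : \cE \to \cSet$ with $M(A) \not\subseteq M(B)$. The second step is to upgrade $M$ into a modal functor valued in $\cTop$ by equipping each set $M(X)$ with a topology. The natural candidate takes the closed sets to be arbitrary intersections of sets of the form $M(S)$ with $S = \Diamond S$; axiom \eqref{ax:S4} is what makes this family stable under finite union (hence a topology) and ensures that each $M(\Diamond A)$ is closed, so that $\overline{M(A)} \subseteq M(\Diamond A)$ holds automatically.

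The main obstacle is to obtain the reverse inclusion $\overline{M(A)} \supseteq M(\Diamond A)$ together with continuity of every $M(f)$. A generic Boolean model may identify subobjects that are unequal in $\cE$, producing spurious closed sets that shrink the topological closure below $M(\Diamond A)$. I expect to remedy this by a Henkin-style enrichment of $M$, simultaneously adjoining witnesses for existential statements and separating unequal subobjects uniformly across all objects; this is plausibly where the ``lax'' refinement alluded to in the introduction enters. Axiom \eqref{ax:PI}, via Remark~\ref{rmq:proj-open}, is then what identifies the topology on $M(X \times Y)$ with the product of the topologies on $M(X)$ and $M(Y)$, so that projections, diagonals, and pairings are continuous. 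Axiom \eqref{ax:LC}, applied along acceptable loops built from embeddings and their converses, is what guarantees continuity for morphisms factoring through embeddings and for the partial maps and substitution-induced compositions that a Henkin procedure inevitably produces. The delicate point is the orchestration: the enrichment, the closure under $\Diamond$, and the addition of witnesses must be coordinated so that the three axioms survive and the topology on each $M(X)$ is simultaneously fine enough to force $\overline{M(A)} = M(\Diamond A)$ yet coarse enough to make every $M(f)$ continuous; \eqref{ax:LC} is precisely what I expect to make this balance feasible in the presence of embeddings.
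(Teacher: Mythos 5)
Your soundness direction is fine and matches Proposition~\ref{prop:top-soundness}. The completeness direction, however, has a genuine gap at its central step: the ``definable topology'' whose closed sets are intersections of the $M(S)$ with $S=\Diamond S$ does not make $M$ a modal functor, because it fails to preserve finite products. For the product $M(X\times Y)$ this topology admits \emph{every} $\Diamond$-fixed definable $S\subseteq X\times Y$ as a closed set, whereas the product topology on $M(X)\times M(Y)$ only has closed sets that are intersections of ``crosses'' $C\oplus D = \pi_X^{-1}C\cup\pi_Y^{-1}D$. Axiom \eqref{ax:PI} guarantees that rectangles close up correctly, but it does not make every $\Diamond$-fixed subobject of $X\times Y$ an intersection of definable crosses --- that is an infinitary distributivity which holds for \emph{arbitrary} subsets in $\cTop$ (it is exactly the identity $\Diamond A=\bigcap_{A\subseteq I\oplus J}\Diamond I\oplus\Diamond J$ used in the proof of Proposition~\ref{prop:PE-valid}) but has no reason to hold at the level of the definable subobjects of a small modal category. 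So the definable topology on $M(X\times Y)$ is in general strictly finer than the product topology, the diagonal and pairing maps need not be continuous for it, and no Henkin-style enrichment of a \emph{single} model repairs this, since the problem is not a failure of conservativity but a mismatch of topologies. (Conservativity of a single Boolean model is also not available: G\"odel completeness only gives a jointly conservative family.)

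The paper avoids this by never topologizing a single model. Instead, Lemma~\ref{lem:extend-model} produces an $\omega$-chain of Boolean models $M_0\to M_1\to\cdots$ together with product-stable relations $R_i\subseteq M_i\times M_{i+1}$ such that $\llbracket\Diamond\varphi\rrbracket_{M_i}=R_i^{-1}\llbracket\varphi\rrbracket_{M_{i+1}}$; the topology on the colimit is generated by the relational cones $I_p=\setst{[q]}{p\mathrel{R}q}$ as in \S~\ref{subsec:top-mod}, and product preservation is then automatic because $I_{(p,q)}=I_p\times I_q$. Your instinct that witnesses must be adjoined is in the right direction, but the witnesses are for $\Diamond$ (a new point related to $a$ and satisfying $\varphi$ whenever $a\vDash\Diamond\varphi$), not for $\exists$, and the role of \eqref{ax:LC} is also different from what you describe: it is used in Lemma~\ref{lem:closure-relations} to verify that after closing the relations $R_i$ under composition, partial maps and products --- which is needed to make the induced functor to relational sequences preserve embeddings and limits --- the closed-off relations still satisfy $\ovl{R}_{ij}^{-1}\llbracket\varphi\rrbracket_{M_j}\subseteq\llbracket\Diamond\varphi\rrbracket_{M_i}$, i.e.\ that the topological closure does not overshoot $M(\Diamond\varphi)$.
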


We have seen in \S~\ref{sec:top-ax} that $\cTop$ is a topological modal category. This in fact shows half of Theorem~\ref{thm:completeness-top} (the semantics is sound):

\begin{proposition}\label{prop:top-soundness}
	If a modal category admits a conservative modal functor to a power of $\cTop$, then it is topological.
\end{proposition}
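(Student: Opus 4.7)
The plan is to reduce the problem to the two observations we already have at our disposal: first, that $\cTop$ itself satisfies all three axioms (by \eqref{ax:S4} being the standard fact about Kuratowski closure, by Lemma~\ref{lem:PI-valid}, and by Proposition~\ref{prop:PE-valid}); and second, that a conservative modal functor cannot introduce, but also cannot hide, failures of these axioms.

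First I would verify that a power $\prod_i \cTop$ is itself a topological modal category. The modal structure on a product is pointwise: an $\Mc$-subobject of $(X_i)_i$ is a family of subspaces $(A_i \subseteq X_i)_i$, products and projections are computed componentwise, and $\Diamond$ is applied in each coordinate. Consequently each of \eqref{ax:S4}, \eqref{ax:PI} and \eqref{ax:LC} holds in $\prod_i \cTop$ iff it holds in every coordinate, which it does by the results of \S\ref{sec:top-ax}. For \eqref{ax:LC} I would also note that an acceptable loop in $\prod_i \cTop$, viewed coordinatewise, is again acceptable (the rules \ref{acc:1}--\ref{acc:5} are componentwise in nature).

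Next, let $F : \cE \to \prod_i \cTop$ be a conservative modal functor. By definition $F$ is lex, preserves the factorization system, preserves $\Diamond$, and each $\Sub_\cE(X) \to \Sub(F(X))$ is an embedding of Boolean algebras, in particular injective and order-reflecting. Because $F$ preserves finite limits it preserves products, projections, diagonals, equalizers, and hence (as subobjects of $X\times Y$) the graphs of partial maps and of their reciprocals; it also preserves direct images along embeddings and surjections, so it preserves the operations $R \mapsto R(-)$ and $R \mapsto R^{-1}(-)$ on subobjects. I would then argue by induction on rules \ref{acc:1}--\ref{acc:5} that $F$ sends an acceptable loop in $\cE$ to an acceptable loop in $\prod_i \cTop$; this is a routine check since each clause is defined from constructions $F$ preserves.

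With these ingredients in place, the reflection argument is uniform across the three axioms. For any instance of \eqref{ax:S4}, \eqref{ax:PI} or \eqref{ax:LC} in $\cE$ with subobject data $A, B, \dots$ and loop $R_1,\dots,R_n$, apply $F$ term by term; the resulting inequality in $\prod_i \cTop$ is an instance of the same axiom (with the image loop still acceptable), hence valid by the first step. Since $F$ is conservative on each $\Sub_\cE(X)$, the inequality descends back to $\cE$. This exhausts the three axiom schemas and shows that $\cE$ is topological, which is the desired conclusion. The only point demanding care is the bookkeeping in the inductive verification that $F$ preserves acceptable loops, but no genuine obstacle arises because each clause \ref{acc:1}--\ref{acc:5} is built from categorical operations preserved by any morphism of (Boolean) modal categories.
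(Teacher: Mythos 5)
Your proposal is correct and follows essentially the same route as the paper: the paper likewise transfers each axiom instance along the (jointly conservative) family of modal functors to $\cTop$, observes that acceptable loops are sent to acceptable loops, invokes the validity of the axioms in $\cTop$ from \S~\ref{sec:top-ax}, and reflects the inequalities back via conservativity. The extra bookkeeping you supply (pointwise structure on the power, preservation of the loop constructions) is exactly what the paper leaves implicit.
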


\begin{proof}
	Suppose that $\cE$ is a modal category and let $(F_i : \cE \to \cTop)_{i\in I}$ be a conservative family of modal functors (or equivalently a single conservative modal functor $\cE \to \cTop^I$). We only show that $\cE$ satisfies the loop contraction axiom, the other axioms being similar. Each acceptable loop in $\cE$ is sent to an acceptable loop by each $F_i$. Hence the image by each $F_i$ of the inequality $\eqref{ax:LC}$ holds for each acceptable loop, and since the family $(F_i)_i$ is jointly conservative, $\eqref{ax:LC}$ holds in $\cE$.
\end{proof}

The rest of this section is devoted to proving the other direction of Theorem~\ref{thm:completeness-top}.

\subsection{Constructing topological models}
\label{subsec:top-mod}

We present a slightly more general version of the construction of \cite[Cap.~II, \S~7]{GhiModalitaCategorie1990} to build topological models. The same thing would work for filtered colimits, but we only need it for $\omega$-indexed colimits.

A \emph{relational sequence} $(X,R)$ is a sequence  of sets and functions
$X_1 \to X_2 \to X_3 \to \cdots$
equipped with relations $R_{ij} \subseteq X_i \times X_j$ for each $i \leq j$ such that:
\begin{itemize}
	\item $R_{ij} R_{jk} \subseteq R_{ik}$ for all $i \leq j \leq k$,
	\item $R_{ij}$ contains the graph of the map $X_i \to X_j$ for all $i\leq j$.
\end{itemize}
An \emph{element} of a relational sequence $(X,R)$ is an element  $x$
of some of the $X_k$, and we simply write $x \in X$
to express this.
We also simply write $x R y$ instead of $x R_{ij} y$ when this creates no confusion. A morphism between two relational sequences $(X,R)$ and $(Y,S)$ is a natural transformation $f : X\to Y$ between the underlying sequences of sets and functions 
which preserves the relations: $x R y \implies f(x) S f(y)$. We denote by $\cSetRel^\omega$ the category of relational sequences. This notation is justified by the fact that its objects are the lax natural transformations from $\omega$ to a certain category $\cSetRel$.

Given a relational sequence $(X,R)$, we build a topological space $\colim(X,R)$ as follows. The underlying set is the colimit $c(X) = \colim_i X_i$. For all $p \in X$, we denote by $[p]$ its image in $c(X)$. For all $p \in X$, we define
\[ I_p \coloneqq \setst{[q]}{p R q} \subseteq c(X) \]
so that $[r] \in I_p \iff \exists q \in X : [r] = [q] \land p R q$.
We equip $c(X)$ with the topology whose open subsets are generated by the $I_p$.

\begin{lemma}
	For each $x \in c(X)$, $\setst{I_p}{x=[p]}$ is a basis of open neighborhoods of $x$.
\end{lemma}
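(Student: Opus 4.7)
The plan is to verify both required properties of a neighborhood basis: that each $I_p$ with $[p]=x$ is in fact an open neighborhood of $x$, and that any open neighborhood of $x$ contains some such $I_p$.

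First I would show $x \in I_p$ whenever $x=[p]$. This uses the second axiom on $(X,R)$: since $R_{ii}$ contains the graph of the identity on $X_i$, we have $p R_{ii} p$, hence $[p] \in I_p$. Each $I_p$ is open by construction, so every member of the candidate basis is indeed an open neighborhood of $x$.

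The substantive step is the basis property. An arbitrary open neighborhood $U$ of $x$ contains, by definition of the topology, a finite intersection $I_{p_1} \cap \cdots \cap I_{p_n}$ still containing $x$. For each $k$, $x \in I_{p_k}$ means there is $q_k \in X$ with $[q_k]=x$ and $p_k R q_k$. Since $c(X)$ is an $\omega$-colimit of sets and there are only finitely many $q_k$, I can pick an index $N$ large enough that all the $q_k$ map to a common element $p \in X_N$, with $N$ also exceeding the indices of all $p_k$. Then $[p]=x$, and using that $R$ contains the graphs of the connecting maps together with the composition axiom $R_{ij}R_{jk} \subseteq R_{ik}$, each relation $p_k R q_k$ extends to $p_k R p$.

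It remains to check $I_p \subseteq I_{p_k}$ for each $k$, which gives $I_p \subseteq U$. If $[r] \in I_p$, pick $s$ with $[s]=[r]$ and $p R s$; then $p_k R p$ together with $p R s$ yields $p_k R s$ by composition, so $[r] = [s] \in I_{p_k}$. The main obstacle, such as it is, is this bookkeeping with indices in the filtered colimit: one must be careful to choose $N$ large enough to simultaneously unify all the chosen representatives $q_k$ and to accommodate the composition of the various $R_{ij}$. Once the correct $p$ is produced, the rest is an immediate application of the two defining axioms of a relational sequence.
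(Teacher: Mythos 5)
Your proof is correct and rests on exactly the same ingredients as the paper's: reflexivity of $R$ (the graphs of the connecting maps) to get $x \in I_p$ when $[p]=x$, the composition axiom to get $I_q \subseteq I_p$ from $p \mathrel{R} q$, and eventual identification of representatives in the sequential colimit. The only difference is organizational — you unfold the subbasis-generated topology and handle a finite intersection of the $I_{p_k}$ directly, whereas the paper packages the same computation as a filteredness condition plus a refinement condition and appeals to ``general topological facts'' — so the two arguments are essentially the same.
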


\begin{proof}
	We first verify that $I_p$ contains $x$ if $x = [p]$. This is because 
    $R$ contains the graph of  $X_i\longrightarrow X_{i+1}$, 
    so that $x = [p] \in I_p$. We also see that if $p R q$, then $I_q \subseteq I_p$. Indeed, if $q R r$ then $p R q R r$, so that $p R r$.
	
	Because of general topological facts, we need to show two things to conclude:
	\begin{itemize}
		\item the set of designated neighborhoods of $x$ is filtered;
		\item if $[p]=x$ and $y \in I_p$, then there is $q$ with $[q]=y$ and $I_q \subseteq I_p$.
	\end{itemize}
	
	For the first statement, let $p \in X$ and $q \in X$ such that $[p] = [q] = x$. This means that there is some index $k$ big enough such that the images of $p$ and $q$ in $X_k$ are equal to the same point $r$. As a consequence, $p R r$ and $q R r$, which shows that $I_r \subseteq I_p \cap I_q$.
	
	For the second statement, let $p \in X$ and suppose that $y \in I_p$. By definition of $I_p$, this means that there is $q \in X$ such that $p R q$ and $[q] = y$. Then $I_q \subseteq I_p$.
\end{proof}

\begin{example}
	Both posets and metric spaces occur as special cases of this construction. If $X$ is a poset, we put $X_i = X$ for all $i$ and $x R_{ij} y$ iff $x \leq y$ 
    (the maps $X=X_i\longrightarrow X_{i+1}=X$ are the identity). 
    If $X$ is a metric space, we also put $X_i = X$ for all $i$ 
    (the transition maps are again identities) 
    and $x R_{ij} y$ iff $d(x,y) \leq 1/i - 1/j$.
\end{example}

\begin{lemma}
	The construction above defines a functor $c : \cSetRel^\omega \to \cTop$.
\end{lemma}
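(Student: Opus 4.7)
The plan is to set $c(f) : c(X)\to c(Y)$ to be the map induced on set-theoretic colimits by the natural transformation of underlying sequences of sets associated to a morphism $f : (X,R)\to(Y,S)$. With this choice, the identities $c(\id)=\id$ and $c(g\circ f)=c(g)\circ c(f)$ are automatic from the universal property of $\colim_i X_i$ in $\cSet$, so the only content of the lemma is continuity of $c(f)$. Since the subsets $I_q$ generate the topology of $c(Y)$, I would reduce continuity to showing that $c(f)^{-1}(I_q)$ is open in $c(X)$ for every $q\in Y$.

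The starting observation is that $c(f)(I_p)\subseteq I_{f(p)}$ for every $p\in X$: if $p R p'$ then $f(p)\, S\, f(p')$ because $f$ preserves the relations, so $c(f)([p'])=[f(p')]\in I_{f(p)}$. This is not enough on its own, because a given $q\in Y$ need not be of the form $f(p)$ for some $p\in X$ with $[p]=x$; so I would promote it to ``$c(f)^{-1}(I_q)$ is a neighborhood of each of its points'' using the filteredness of the colimit. Given $x\in c(f)^{-1}(I_q)$, choose $p_0\in X$ and $r\in Y$ with $[p_0]=x$, $[r]=c(f)(x)$, and $q\, S\, r$. Since $[f(p_0)]=[r]$ in $c(Y)$, filteredness yields an index $k$ and an element $s\in Y_k$ to which both $f(p_0)$ and $r$ map along the transitions. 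Let $p_1\in X_k$ be the image of $p_0$; naturality of $f$ gives $f(p_1)=s$, and since $R$ contains the graphs of the transition maps one also has $p_0 R p_1$, so $[p_1]=x$. Composing $q\, S\, r\, S\, s$ yields $q\, S\, s = q\, S\, f(p_1)$, and then for any $[p']\in I_{p_1}$ we have $f(p_1)\, S\, f(p')$, hence $q\, S\, f(p')$, i.e.\ $c(f)([p'])\in I_q$. Thus $I_{p_1}$ is a basic open neighborhood of $x$ contained in $c(f)^{-1}(I_q)$, which is therefore open.

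The hard part, as this sketch reflects, is the lifting step: knowing merely that $[f(p_0)]\in I_q$ is not enough, and one must move to a later stage $p_1$ where $f(p_1)$ itself is an honest witness $s$ with $q\, S\, s$. This is the same filtered-colimit device that powered the neighborhood-basis lemma just proved, and it is the only nontrivial ingredient; everything else is bookkeeping about the definitions.
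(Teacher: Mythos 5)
Your proof is correct and follows the same route as the paper, whose entire argument is the single observation $c(f)(I_p)\subseteq I_{f(p)}$ (a consequence of $f$ preserving the relations). The only difference is that you spell out the lifting step — replacing $p_0$ by its image $p_1$ at a later stage so that $f(p_1)$ becomes an honest witness with $q\,S\,f(p_1)$ — which the paper leaves implicit behind the neighborhood-basis lemma; this is a legitimate gap-filling, not a different method.
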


\begin{proof}
	Let $f : (X,R)\to(Y,S)$ be a morphism in $\cSetRel^\omega$. We must show that the induced map $c(f) : c(X) \to c(Y)$ is continuous. Indeed, for any $p \in X$, we have $c(f)(I_p) \subseteq I_{f(p)}$ since $pRq \implies f(p)Rf(q)$.
\end{proof}

An \emph{embedding} in $\cSetRel^\omega$ is a morphism $(X,R) \to (Y,S)$ which is pointwise injective and such that $R$ is the restriction of $S$ to $X \subseteq Y$.

\begin{lemma}\label{lem:c-embed}
	The functor $c : \cSetRel^\omega \to \cTop$ sends embeddings to embeddings.
\end{lemma}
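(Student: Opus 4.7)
The plan is to unpack both notions of embedding and then verify the two required properties (injectivity and subspace topology) by computing with the bases $\{I_p^X\}_p$ and $\{I_q^Y\}_q$ from the preceding construction. Recall that an embedding in $\cTop$ is a regular monomorphism, i.e.\ an injection whose domain topology agrees with the subspace topology induced by the codomain; equivalently, an injection $g$ such that for every basic open $U$ of the domain there is an open $V$ of the codomain with $g(U) = V \cap g(\mathrm{dom})$.

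First I would check injectivity of $c(f)$. If $c(f)[p] = c(f)[q]$ for $p\in X_i$ and $q \in X_j$, then in the colimit $c(Y)$ there is some index $\ell \geq i,j$ at which the images of $f_i(p)$ and $f_j(q)$ in $Y_\ell$ coincide. By naturality of $f$, these images equal $f_\ell(p_\ell)$ and $f_\ell(q_\ell)$ where $p_\ell,q_\ell$ are the transported elements in $X_\ell$. Pointwise injectivity of $f_\ell$ gives $p_\ell = q_\ell$, hence $[p] = [q]$ in $c(X)$.

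Next I would prove the key equality
\[
c(f)(I_p^X) \;=\; I_{f(p)}^Y \cap c(f)(c(X))
\]
for every $p \in X$. The inclusion $\subseteq$ is immediate: if $p R q$ then $f(p) S f(q)$ since $f$ is a morphism of relational sequences, whence $[f(q)] \in I_{f(p)}^Y$. For the reverse inclusion, suppose $[r] \in I_{f(p)}^Y$ and $[r] = c(f)[q] = [f(q)]$ for some $q \in X_j$, with $f(p) S r$ for some $r \in Y_k$. Choose $\ell \geq i,j,k$ large enough so that $r$ and $f_j(q)$ have equal images in $Y_\ell$; using that $S$ contains the graphs of the transition maps and composes as stated in the axioms for relational sequences, we get $f_i(p)\, S_{i\ell}\, f_\ell(q_\ell)$. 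The embedding hypothesis in $\cSetRel^\omega$ — that $R$ is the restriction of $S$ along $f$ — then yields $p\, R_{i\ell}\, q_\ell$, so $[q] = [q_\ell] \in I_p^X$, as required.

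Since the $I_p^X$ form a basis of $c(X)$ (by the basis lemma just proved) and their images are exactly the intersections of $I_{f(p)}^Y$ with $c(f)(c(X))$, the map $c(f)$ is a homeomorphism onto its image endowed with the subspace topology, hence an embedding in $\cTop$. The main obstacle is the bookkeeping in step three: one must move $p$, $q$, $r$ to a common late index and combine the composition axiom $S_{ik}S_{k\ell} \subseteq S_{i\ell}$ with the fact that $S$ contains the graphs of the transition maps, in order to apply the embedding property at the single index pair $(i,\ell)$.
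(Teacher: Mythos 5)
Your proof is correct and follows essentially the same route as the paper's: injectivity of $c(f)$ from the filtered colimit of injections, and the identity $c(f)(I_p)=I_{f(p)}\cap c(f)(c(X))$ obtained by pushing a witness to a common late index where it lands in the image of $f$ and then applying the hypothesis that $R$ is the restriction of $S$. The index bookkeeping you spell out in your third step is exactly the content of the paper's brief remark justifying the middle implication in its chain of equivalences.
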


\begin{proof}
	Let $f : (X,R) \hookrightarrow (Y,S)$ be an embedding in $\cSetRel^\omega$. Since a filtered colimit of injections in $\cSet$ is an injection, $c(f)$ is an injection $c(X) \hookrightarrow c(Y)$. We show that for any $p \in X$, the open subset $I_p \subseteq c(X)$ is the restriction of $I_{f(p)} \subseteq c(Y)$. Indeed, for $r \in X$, we have
	\begin{align*}
		[r] \in I_p &\iff \exists q\in X : [r]=[q] \land pRq\\
            &\iff \exists q\in X : [f(r)]=[f(q)] \land f(p)Rf(q)\\
		&\iff \exists q\in Y : [f(r)]=[q] \land f(p)Sq\\
		&\iff [f(r)] \in I_{f(p)} \text{.}
	\end{align*}
	The middle implication is due to the fact that if $[q] = [f(r)]$, then the image of $q$ in $Y_k$ is in $f(X_k) \subseteq Y_k$ for $k$ big enough.
\end{proof}

\begin{lemma}\label{lem:c-is-lex}
	The functor $c : \cSetRel^\omega \to \cTop$ preserves finite limits.
\end{lemma}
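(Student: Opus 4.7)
The strategy is to verify preservation of the terminal object, binary products, and equalizers separately. The underlying-set computation is automatic: composing $c$ with the forgetful $\cTop \to \cSet$ is the filtered colimit $\colim_i X_i$, and filtered colimits commute with finite limits in $\cSet$. So there is a canonical continuous bijection from $c$ of any finite limit to the finite limit of the $c$'s, and the only thing to check is that it is a homeomorphism. The terminal object (the constant sequence on $\{*\}$ with its unique relational structure) is trivially sent to the one-point space. For the equalizer of $f,g : (X,R) \to (Y,S)$, the obvious candidate is $Z_i = \{x \in X_i : f_i(x) = g_i(x)\}$ with restricted transition maps and restricted relation; the inclusion $(Z,R|_Z) \hookrightarrow (X,R)$ is an embedding in $\cSetRel^\omega$, so by Lemma~\ref{lem:c-embed} $c(Z) \hookrightarrow c(X)$ is an embedding in $\cTop$ whose image is the set-theoretic equalizer. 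This gives $c(Z)$ the subspace topology, which is exactly the topology of the equalizer in $\cTop$.

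The substantive step is binary products. I would describe $(X,R) \times (Y,S)$ levelwise with sets $X_i \times Y_i$, componentwise transition maps, and componentwise relation
\[ (x,y)(R\times S)_{ij}(x',y') \iff x R_{ij} x' \text{ and } y S_{ij} y' ; \]
checking this is a relational sequence and the categorical product is routine. On underlying sets, $c$ of this product is canonically $c(X) \times c(Y)$. The key topological content is the identity $I_{(p,q)} = I_p \times I_q$ under this identification, for any $(p,q) \in X_i \times Y_i$. The inclusion $\subseteq$ is immediate. For the reverse inclusion, given $([r],[s]) \in I_p \times I_q$ with witnesses $p R_{ik} r$ and $q S_{il} s$, one may assume $k \leq l$, push $r$ forward to $r' \in X_l$ along the transition map, and use that $R$ contains the graph of $X_k \to X_l$ together with $R_{ik} R_{kl} \subseteq R_{il}$ to obtain $p R_{il} r'$ with $[r'] = [r]$; hence $([r],[s]) = [(r',s)] \in I_{(p,q)}$. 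Since $\{I_p \times I_q\}$ is a basis for the product topology on $c(X) \times c(Y)$ and $\{I_{(p,q)}\}$ is a basis for $c((X,R) \times (Y,S))$, the two topologies coincide.

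The main (indeed only) obstacle is this level-lifting step for products; everything else is either formal or handled by Lemma~\ref{lem:c-embed}. Combining the three cases yields preservation of all finite limits.
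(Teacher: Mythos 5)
Your proof is correct and follows essentially the same route as the paper's: pointwise products with the identity $I_{(p,q)} = I_p \times I_q$ giving continuity of the inverse bijection, and equalizers handled via Lemma~\ref{lem:c-embed} together with the commutation of filtered colimits with finite limits in $\cSet$. Your explicit level-lifting argument for the inclusion $I_p \times I_q \subseteq I_{(p,q)}$ fills in a detail the paper leaves implicit, but the approach is the same.
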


\begin{proof}
	We start with finite products. Finite products in $\cSetRel^\omega$ are computed pointwise: the product of $(X,R)$ and $(Y,S)$ is the sequence $(X_i\times Y_i)_i$ with relations $(R_{ij}\times S_{ij})_{i\leq j}$. Note that $\colim_i X_i\times Y_i = (\colim_i X_i) \times (\colim_i Y_i)$, so that $c(X\times Y) \to c(X)\times c(Y)$ is a continuous bijection. We need to show that the inverse function is also continuous. For all $(p,q) \in X_i\times Y_i$, we have $I_{(p,q)} = \setst{[(p',q')]}{(p,q) \mathbin{R\times S} (p',q')} = \setst{([p'],[q'])}{pRp' \land qRq'} = I_p \times I_q$. As a consequence, the inverse map $c(X)\times c(Y)\to c(X\times Y)$ is continuous.
	
	Finally, we treat equalizers. An equalizer in $\cSetRel^\omega$ is an embedding and is thus sent to an embedding in $\cTop$ by Lemma~\ref{lem:c-embed}. Combining this with the fact that equalizers commute with filtered colimits in $\cSet$, we obtain that $c : \cSetRel^\omega \to \cTop$ commutes with equalizers.
\end{proof}

\subsection{Completeness of the topological semantics}

This section is dedicated to proving that the topological semantics is complete for topological modal categories, i.e., to complete the proof of our main Theorem~\ref{thm:completeness-top}. We fix a topological modal category $\cE$. Given two functors $M,N : \cE \to \cSet$, a \emph{relation} $R \subseteq M\times N$ is a family of subsets $R(X) \subseteq M(X)\times N(X)$. It is \emph{stable under products} if $(x,y) R (x',y')$ holds  whenever $x R y$ and $x' R y'$. It is \emph{stable under partial maps} if for all partial maps $f : X\relto Y$ in $\cE$ and all $(x,y) \in M(X)\times N(X)$ such that $x R y$, if both $x$ and $y$ are in the domain of $f$, then $f(x) R f(y)$.
Notice that stability under partial maps implies both stability under total maps (``if $f:X\longrightarrow Y$ is an arrow in $\cE$ and $x R y$ then $f(x) R f(y)$'') and  under embeddings (``if $f:X\hookrightarrow Y$ is an embedding in $\cE$ and $f(x) R f(y)$ then $x R y$'').

\subsubsection{The Extension Lemma}

The Lemma below will be used as a basic building block for our completeness theorem.

\begin{lemma}\label{lem:extend-model}
	For each Boolean model $M : \cE \to \cSet$, there is another Boolean model $N : \cE \to \cSet$, a natural transformation
    $M \to N$ and a relation $R \subseteq M\times N$ which is stable under products, contains the graph of $M \to N$ and such that $\llbracket\Diamond\varphi\rrbracket_M = R^{-1} \llbracket\varphi\rrbracket_N$ for each subobject $\varphi \subseteq X$ in $\cE$.
\end{lemma}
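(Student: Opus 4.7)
The plan is a Henkin-style construction. For each triple $w = (X, x, \varphi)$ with $x \in M(X)$, $\varphi \in \Sub(X)$, and $x \vDash_M \Diamond \varphi$, I introduce a fresh constant $c_w$ of sort $X$ and form the theory $T$ over the internal language of $\cE$ (enlarged by these constants), consisting of the complete elementary diagram of $M$ together with the axioms $c_w \vDash \varphi \wedge \theta$ for every $\theta \in \Sub(X)$ satisfying $x \vDash_M \Box \theta$. A $\cSet$-valued Boolean model of $T$ will furnish the desired $N$, a natural transformation $\iota : M \to N$ (sending each $x$ to the interpretation of its canonical diagram constant), and witnesses $y_w \coloneqq c_w^N \in N(\varphi)$.

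The crux will be the consistency of $T$, where \eqref{ax:PI} is essential. A finite fragment of $T$ involves constants $c_{w_1}, \ldots, c_{w_n}$ with $w_i = (X_i, x_i, \varphi_i)$ and, per constant, a finite meet of $\Box$-axioms; set $\chi_i \in \Sub(X_i)$ to be the meet of $\varphi_i$ with these formulas. The normal modal identity $\Diamond \alpha \wedge \Box \beta \leq \Diamond(\alpha \wedge \beta)$ yields $x_i \vDash_M \Diamond \chi_i$, and iterating \eqref{ax:PI} then produces
\[ (x_1, \ldots, x_n) \vDash_M \Diamond \bigl( \pi_1^{-1}\chi_1 \wedge \cdots \wedge \pi_n^{-1}\chi_n \bigr)\text{,} \]
which shows $\pi_1^{-1}\chi_1 \wedge \cdots \wedge \pi_n^{-1}\chi_n \subseteq X_1 \times \cdots \times X_n$ to be a nonzero subobject. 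The fragment is thus realizable in some $\cSet$-valued extension of $M$, and classical completeness for small Boolean categories produces a model $N$ of the full theory.

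I define $R \subseteq M \times N$ by $x R y \iff \bigl(\forall \psi \in \Sub(X) : y \in N(\psi) \Rightarrow x \in M(\Diamond \psi)\bigr)$. The graph of $\iota$ lies in $R$ by \eqref{ax:S4}; the inclusion $R^{-1}\llbracket\varphi\rrbracket_N \subseteq \llbracket\Diamond\varphi\rrbracket_M$ is tautological; and the reverse is witnessed by $y_{(X, x, \varphi)}$ through a contrapositive, since $x \not\vDash \Diamond \psi$ forces $x \vDash \Box \neg \psi$ and hence the axiom $y_{(X, x, \varphi)} \vDash \neg \psi$, contradicting $y_{(X, x, \varphi)} \vDash \psi$.

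The hard part will be product stability of $R$: from $x R y$ and $x' R y'$ one needs $(y, y') \in N(\theta) \Rightarrow (x, x') \in M(\Diamond \theta)$ for \emph{arbitrary}, possibly non-product $\theta \in \Sub(X \times X')$. Since the pair $(y, y')$ may inherit constraints from the Lindenbaum completion of $T$ beyond those imposed on $y$ and $y'$ individually, I will enlarge the Henkin construction so as to introduce witnessing constants not only for elements of each $M(X)$ but also for elements of all product objects $M(X_1 \times \cdots \times X_n)$, coherently identifying the tuple of individual witnesses with the single witness for a tuple. The joint consistency of this enlarged theory is again supplied by \eqref{ax:PI}, now applied at the product level: it is precisely this axiom that ensures no conflict arises between the $\Box$-fragment of a tuple $(x_1, \ldots, x_n) \in M(X_1 \times \cdots \times X_n)$ and the independent $\Box$-fragments of its components, so that the contrapositive argument above extends uniformly to all $\theta$.
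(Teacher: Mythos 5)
Your overall strategy (elementary diagram plus Henkin witnesses for each basic pair, consistency via \eqref{ax:PI}) is the same as the paper's first proof, and your consistency argument for the per-constant fragment is essentially right. But there is a genuine gap at exactly the point you flag: product stability of $R$. The problem is your choice of $R$ as the \emph{maximal} relation $x\,R\,y \iff (\forall\psi:\ y\vDash\psi\Rightarrow x\vDash\Diamond\psi)$. This relation contains many pairs $(x,y)$ in which $y$ is not a designated witness constant, and the theory $T$ places no joint constraints whatsoever on such elements: if $x\,R\,y$ and $x'\,R\,y'$, nothing prevents $N$ from realizing at $(y,y')$ some $\theta\subseteq X\times X'$ with $(x,x')\not\vDash\Diamond\theta$. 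Your proposed enlargement (witnesses at product sorts, identified with tuples of individual witnesses) only controls the joint types of \emph{tuples of witness constants}; it says nothing about arbitrary pairs lying in the maximal relation, so ``the contrapositive argument extends uniformly'' does not close the argument.

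The repair --- which is what the paper does --- is to abandon the maximal relation and instead take $R$ to be \emph{generated} by the identity pairs $(a,\iota(a))$ and the witness pairs $(a_i,\kappa^{\ua}_i)$, closed under componentwise products. Product stability then holds by construction, but the inclusion $R^{-1}\llbracket\psi\rrbracket_N\subseteq\llbracket\Diamond\psi\rrbracket_M$ is no longer tautological: it requires adding to $T$ the joint axioms (the set $K_2$ in the paper) asserting $B(\uc,\underline{\kappa}^{\ua_1},\dots,\underline{\kappa}^{\ua_m})$ whenever $M\models(\Box B)(\uc,\ua_1,\dots,\ua_m)$ --- note the mixed tuples involving old diagram constants $\uc$ as well as possibly repeated witness constants. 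The consistency of this enlarged theory is the real content of the proof, and besides \eqref{ax:PI} it also uses the T axiom and the lax-substitution (continuity) inequality, needed to push $\Box$ past the repetitions occurring in those tuples; your sketch omits this ingredient. Finally, a small point in your per-constant consistency argument: from $(x_1,\dots,x_n)\vDash_M\Diamond(\pi_1^{-1}\chi_1\wedge\cdots\wedge\pi_n^{-1}\chi_n)$ you need the conjunction to be realized in an elementary extension of $M$, which is slightly more than the subobject being nonzero in $\cE$; this does follow (e.g.\ because \eqref{ax:PI} forces $\Diamond$ to be the identity on $\Sub(1)$), but it deserves a word.
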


We supply two proofs of this Lemma. The first proof is mostly sketched,  uses a translation into logical languages and has the merit of focusing intuition to the main point of the construction. The second proof is purely categorical and makes uses of the representation of Robinson diagrams as filtered pseudo-colimits. A comparison of the two proofs (which are the same but differ in the involved technologies) might be instructive. 

\begin{proof}[First Proof of Lemma~\ref{lem:extend-model}]
	We can always associate with an f-Boolean category $\cE$ a first order classical theory $\cE_T$: the language of such a theory includes a function symbol for every arrow of $\cE$, a predicate symbol for every subobject  in $\cE$, etc.
	The axioms of the theory comprise all formulas which are `internally valid' in $\cE$ (when $\cE$ is a modal category, this method treats modalized formulas as atomic predicates). The statement of the Lemma asks for a suitable elementary extension of a model $M$ of $\cE_T$. As usual in model theory, elementary extensions of $M$ are found via models of the elementary diagram $\Delta(M)$ of $M$. In our case, we need to expand $\Delta(M)$ with the
	sets of sentences $K_1$ and $K_2$ introduced below.
	
	Let us call \emph{basic pairs} the pairs $(\ua,\varphi(\uy))$ given by a formula $\varphi$ with free variables among $\uy=y_1, \dots, y_n$ (of appropriate sorts) and a matching tuple of elements $\ua\in M$ such that $M\models (\Diamond \varphi)(\ua)$.
	For every  basic pair  $(\ua,\varphi(\uy))$,
	we introduce a tuple $\underline{\kappa}^{\ua}=\kappa^{\ua}_1, \dots, \kappa^{\ua}_n$ of \emph{fresh} constants (they depend on $\varphi$ but we omit $\varphi$ from the notation).
	Now
	the set $K_1$ contains the sentences of the kind $\varphi(\underline{\kappa}^{\ua})$, where $(\ua,\varphi(\uy))$ is a basic pair, whereas
	the set $K_2$ contains for every tuple of basic pairs $(\ua_1,\varphi_1(\uy_1)), \dots, (\ua_m,\varphi_m(\uy_m))$
	and for every tuple $\uc\in M$ the sentences of the kind $B(\uc,\underline{\kappa}^{\ua_1},\dots,\underline{\kappa}^{\ua_m})$
	such that $M\models (\Box B)(\uc,{\ua_1},\dots,{\ua_m})$.
	
	To prove the Lemma we first show that set $\Delta(M)\cup K_1\cup K_2$ is consistent.
	Suppose it is not; then by compactness there are finite formulas taken out of it whose conjunction is not consistent  
	(since $\Delta(M)$ 
	is closed under finite conjunctions, we can assume that just one formula is taken out of it). 
	Thus there are finitely many basic pairs $(\ua_1,\varphi_1(\uy_1)), \dots, (\ua_m,\varphi_m(\uy_m)) $, finitely many tuples of basic pairs 
	\begin{equation}\label{eq:basic}
		(\ua_{j1},\varphi_{j1}(\uy_{j1})), \dots, (\ua_{jm_j},\varphi_{jm_j}(\uy_{jm_j})) 
	\end{equation}
	($j=1,\dots, l$), finitely many tuples
	$\uc,\uc_1, \dots, \uc_l\in M$ such that
	$\cE_T$  proves
	\begin{equation}\label{eq:dd}
		C(\uc) \wedge 
		\bigwedge_{j=1}^l B_j(\uc_j,\underline{\kappa}^{\ua_{j1}},
		\dots,\underline{\kappa}^{\ua_{jm_j}}) 
		\wedge \bigwedge_{i=1}^m \varphi_i(\underline{\kappa}^{\ua_i}) \to \bot 
	\end{equation}
	where we have $M\models (\Diamond \varphi_i)(\ua_i)$ (for $i=1,\dots, m$) and $M\models (\Box B_j)(\uc_j,\ua_{j1},
	\dots,\ua_{jm_j})$ for all $j$.
	Notice that the basic pairs $(\ua_1,\varphi_1(\uy_1)), \dots, (\ua_m,\varphi_m(\uy_m)) $ are distinct from each other, but they might occur among  the ones in~\eqref{eq:basic} and the ones in~\eqref{eq:basic} may contain repetitions.
	
	Now all constants occurring in~\eqref{eq:dd} are fresh (in the sense that they do not occur in the language of $\cE_T$), so we can treat them as first order \emph{variables}.
	We can rewrite~\eqref{eq:dd} as
	\begin{equation*}
		\bigwedge_{j=1}^l B_j(\uc_j,\underline{\kappa}^{\ua_{j1}},
		\dots,\underline{\kappa}^{\ua_{jm_j}}) 
		\to \neg C(\uc) \vee \bigvee_{i=1}^m \neg\varphi_i(\underline{\kappa}^{\ua_i})
	\end{equation*}
	and apply the $\Box$ operator thus getting:
	\begin{equation}\label{eq:Fdd1}
		\bigwedge_{j=1}^l (\Box B_j)(\uc_j,\underline{\kappa}^{\ua_{j1}},
		\dots,\underline{\kappa}^{\ua_{jm_j}}) 
		\to \neg C(\uc) \vee \bigvee_{i=1}^m (\Box\neg\varphi_i)(\underline{\kappa}^{\ua_i})
	\end{equation}
	Here we used the product independence axiom to distribute the $\Box$ over disjunctions of formulas not sharing common variables;
	we used also the instance of the reflexivity axiom $\Box \neg C \to \neg C$ and the continuity axiom to permute the $\Box$ with the tuples of variables in the antecedent of the implication (such tuples of variables may contain repetitions, so they represent a proper substitution).
	Using classical tautologies, we can rewrite~\eqref{eq:Fdd1} as
	\begin{equation}\label{eq:Fdd2}
		\bigwedge_{j=1}^l (\Box B_j)(\uc_j,\underline{\kappa}^{\ua_{j1}},
		\dots,\underline{\kappa}^{\ua_{jm_j}}) 
		\wedge C(\uc) \wedge \bigwedge_{i=1}^m (\Diamond\varphi_i)(\underline{\kappa}^{\ua_i}) \to \bot
	\end{equation}
	This is in contrast to our data, because in $M$ we have
	$$
	M\models (\Box B_j)(\uc_j,{\ua_{j1}},\dots,{\ua_{jm_j}}), \quad M\models C(\uc), \quad  M\models
	(\Diamond\varphi_i)({\ua_i})
	$$
	for all $j=1, \dots l$ and $i=1, \dots, n$
	(recall that the $\uc, \uc_j,  \underline{\kappa}^{\ua_{jr}},\underline{\kappa}^{\ua_i}$ are variables in~\eqref{eq:Fdd1}, so that the $\cE_T$-provable formula~\eqref{eq:Fdd1} should be satisfied in $M$ by any tuple of elements from $M$, in particular by the tuple
	formed by the $\uc, \uc_j, {\ua_{jr}},{\ua_i}$).
	
	Now we know that $\Delta(M)\cup K_1\cup K_2$ is consistent, so it has a model $N$ which will be an elementary extension of $M$. The statement of the lemma asks for the existence of a set of relations $R=\{R_X\subseteq M(X)\times N(X)\}$ (indexed by the sorts of our language) such that: 
	\begin{description}
		\item[{\rm (i)}] whenever $(\ua,\varphi(\uy))$ is a basic pair there are $\ub$ such that $\ua R\ub$ componentwise holds and $N\models \varphi(\ub)$; 
		\item[{\rm (ii)}] whenever $\ua R\ub$ componentwise holds and $M\models(\Box B)(\ua)$ then $N\models B(\ub)$.
	\end{description}
	
	Notice that we do not have to care about `closure under products' of our family of relations $R$
	because, for product sorts, the relation is automatically taken to be the product of the components (in the second proof, we shall take another approach, requiring a direct definition of a family of relations closed under products). 
	
	The definition of our family of relations goes as follows: we say that $R_X$ includes all identical pairs 
	$(a,a)$ for $a\in M(X)$ together all pairs $(a_i,b_i)$  such that there is a basic pair
	$(\ua, \varphi)$  such that $a_i$ is the it-h component of the tuple $\ua$ and $b\in N(X)$ is such that $N\models b={\kappa}^{\ua}_i$ (in other words, $b$ is the interpretation in $N$ of the i-th component of the tuple of constants $\underline{\kappa}^{\ua}$).
	
	It is clear that (i) is satisfied because $N\models K_1$. To show (ii), we need the continuity axiom. Suppose we take a pair of tuples  $\uc=c_1, \dots, c_k$ and $\ud=:d_1,\dots, d_k$ such that $c_i R d_i$ holds for all $i=1,\dots,k$
	and such that
	$N\models (\Box B)(\uc)$. Then there are a natural number $l$, a tuple $\ua \in M(X)$, basic pairs 
	$(\ua_1,\varphi_1(\uy_1)), \dots, (\ua_m,\varphi_m(\uy_m)) $ 
	such that:
	\begin{itemize}
		\item $l$ is the sum of the lengths of the tuples $\ua,\ua_1, \dots, \ua_m$;
		\item $l_1, \dots l_k\leq l$;
		\item for all $j=1, \dots, k$, we have that $c_j$ is the $l_j$ element of the tuple $\ua,\ua_1, \dots, \ua_m$;
		\item for all $j=1, \dots, k$, we have that $d_j$ is the $l_j$ element of the tuple $\ua,
		\underline{\kappa}^{\ua_1},\dots,\underline{\kappa}^{\ua_m}$ (here we directly indicate with $\underline{\kappa}^{\ua_1},\dots,\underline{\kappa}^{\ua_m}$ the elements of $N$ interpreting the constants 
		$\underline{\kappa}^{\ua_1},\dots,\underline{\kappa}^{\ua_m}$).
	\end{itemize}
	The trick now is to replace the (atomic) formula $ (\Box B)(x_1,\dots, x_k)$
	with the (atomic) formula $ (\Box B)(x_{l_1},\dots, x_{l_k})$
	which is a formula
	containing at most the free variables $x_1,\dots, x_l$. 
	Then we have 
	that the tuple 
	$\ua,{\ua_1},\dots,{\ua_m}$ satisfies the formula 
	$(\Box B)(x_{l_1},\dots, x_{l_k})$ in $N$,
	hence also the formula 
	$\Box (B(x_{l_1},\dots, x_{l_k}))$
	by the continuity axiom; since $N\models K_2$, the latter  implies that the tuple $\ua,
	\underline{\kappa}^{\ua_1},\dots,\underline{\kappa}^{\ua_m}$
	satisfies the formula $B(x_{l_1},\dots, x_{l_k})$ in $N$, which means that $N\models B(\ud)$ holds.
\end{proof}

The second proof of Lemma~\ref{lem:extend-model} handles the above combinatorics in a conceptual way.
Before attacking it, we recall how to represent Robinson diagrams in  categorical logic  (this is mostly folklore information). Given an f-Boolean category $\cE$ and an object $X$ in it, the slice category $\cE/X$ `freely adds to $\cE$ a global element of type $X$': this means that there is an equivalence of categories between functors  $F_X: \cE/X \longrightarrow \cat{F}$ and pairs given by a functor 
$F:\cE \longrightarrow \cat{F}$ and a global element
$\mathbf{1}\buildrel{c}\over\longrightarrow F(X)$
(here by `functors' we mean functors preserving the involved logical structure). The functor $F$ is obtained by taking the composition $\cE\longrightarrow \cE/X \buildrel{F_X}\over \longrightarrow \cat{F}$ 
(here $\cE\longrightarrow \cE/X$ is the functor given by the pullback along $X\longrightarrow \mathbf{1}$)
and the global element $\mathbf{1}\buildrel{c}\over\longrightarrow F(X)$ is obtained applying $F_X$ to the diagonal 
\[\begin{tikzcd}
	X \ar[rr," \Delta_X "] \ar[rd, "1_X"'] &[-1em]&[-1em] X\times X \ar[ld,"\pi_2"] \\
	&X&
\end{tikzcd}\]
(we say that $F_X$ \emph{classifies} $F$ and $c$). Whenever we have a commutative triangle 
\[\begin{tikzcd}
	\cE/Y \ar[rr," f^*"] \ar[rd, "N_Y"'] &[-1em]&[-1em] \cE/X \ar[ld,"N_X"] \\
	&\cat{F}&
\end{tikzcd}\]
where $f^*$ is the pullback functor along $X\buildrel{f}\over\longrightarrow Y$, it can be shown that, if $N_X$ classifies $(N,c)$, then $N_Y$ classifies  the pair given by $N$ and the global element obtained by the composition $\mathbf{1}\buildrel{c}\over\longrightarrow N(X)\buildrel{N(f)}\over \longrightarrow N(Y)$.

Given now a Boolean model
$M:\cE\longrightarrow \cSet$, we can first form the \emph{category of elements} $\int M$ of $M$. This is a co-filtered category having as objects the pairs $(x,X)$ where $X$ is an object of $\cE$ and $x\in M(X)$. An arrow
$f: (x,X) \longrightarrow (y,Y)$ in $\int M$ is an arrow $f:X \longrightarrow Y$ in $\cE$ such that $M(f)(x)=y$.
To this category $\int M$ we can associate a pseudofunctor $D_M$ from $(\int M)^{op}$ to the 2-category of f-Boolean categories mapping an object $(x,X)$ to the slice categopry $\cE/X$ and an arrow $f:(x,X)\longrightarrow (y,Y)$ to the pullback functor $\cE/Y\longrightarrow \cE/X$. The filtered pseudo-colimit $\cat{L}_M$ of this functor (as well, by abuse, the pseudofunctor $D_M$ itself) is called the \emph{diagram} of $M$.

Consider now a model $\cat{L}_M\longrightarrow \cSet$: from the universal property of pseudocolimits, this model determines  commutative triangles 
\[\begin{tikzcd}
	\cE/Y \ar[rr," f^* "] \ar[rd, "N_Y"'] &[-1em]&[-1em] \cE/X \ar[ld,"N_X"] \\
	&\cSet&
\end{tikzcd}\]
varying $(a,X) \buildrel{f}\over{\longrightarrow} (b,Y)$ in $\int M$ (all this is up to unspecified suitably commuting iso's). Putting this together with the above information on slice categories, it follows that 
$\cat{L}_M\longrightarrow \cSet$ classifies a model $N:\cE\longrightarrow \cSet$ together with, for every $(a,X) \in \int M$,  elements $\mu_X(a)\in N(X)$. Such elements are such that, whenever we have an arrow $f:X\longrightarrow Y$ in $E$ and an element $b\in M(Y)$ such that $M(f)(a)=b$, we have also that 
$\mu_Y(b)=N(f)(\mu_X(a))$.
This is nothing but a natural transformation $\mu:M\longrightarrow N$, so that we can say that  \emph{the diagram $\cat{L}_M$  classifies elementary extensions of $M$}. This is precisely what we need for our proof.

\begin{proof}[Second Proof of Lemma~\ref{lem:extend-model}]

	Let $K = \setst{(Y,\varphi,y)}{Y \in \cE, \varphi \subseteq Y, y \in M(Y), y \vDash \Diamond\varphi}$: these are the basic pairs of the first proof of the lemma (now they became basic triples, because we display also the sort $Y$).
	An element $i \in K$ is thus decomposed as  $i= (Y,\varphi,y) \eqqcolon (\dom(i),\varphi(i),\pt(i)) $. Consider now the
	category $(\cSet_{fin}/K)^{op}$ whose objects
	are $K$-indexed finite families (below we use $\underline{n}, \underline{m},\dots$ to denote the finite sets $\{1, \dots, n\}, \{1,\dots,m\},\dots$).
	We extend the above notation to tuples 
	$$\sigma=\langle \sigma_i\rangle_{i\leq n}:\underline{n}\longrightarrow  K
	$$ 
	as follows: $\dom(\sigma) = \prod_{i\in \underline{n} } \dom(\sigma_i)$, $\varphi(\sigma) = \bigwedge_{i\in \underline{n}} \pi_{\dom(i)}^{-1} \varphi(\sigma_i)$ and $\pt(\sigma) = (\pt(\sigma_i)\,|\,i\leq n) \in M(\dom(\sigma))$.

	We now define a co-filtered category $I$ and a functor $\int M\longrightarrow I$.
	$I$ has objects the quadruples
	$$
	\langle x, X, \sigma, B\rangle
	$$
	where $(x,X)$ is an object of $\int M$ (thus $x\in M(X)$), $\sigma$ is an object of $(\cSet_{fin}/K)^{op}$ (thus it is a tuple from $K$) and $B \subseteq X\times \dom(\sigma)$ is such that
	\begin{equation}\label{eq:i1}
		(x,\pt \sigma) \vDash \Box B~.
	\end{equation}
	The arrow in $I$ between objects $\langle x, X, \sigma, B\rangle$ and $\langle x',X', \sigma', B'\rangle$ 
	are the pairs $(f,\alpha)$ such that $f: (x,X)\longrightarrow (x',X') \in\int N$
	and $\alpha: \sigma \longrightarrow \sigma'\in (\cSet_{fin}/K)^{op}$ satisfy
	the condition 
	\begin{equation}\label{eq:i2}
		B \leq (f\times \pi_{\alpha})^{-1}  B'
	\end{equation}
	where $\pi_{\alpha}$ is the tuple of projections 
	$\langle \pi_{\alpha(j)}\rangle_{j\leq \underline{n'}}$ (here 
	$\underline{n'}$ is the domain of $\sigma'$). We notice, \emph{en passant}, that the fact that $\alpha: \sigma \longrightarrow \sigma'$ is an arrow in $(\cSet_{fin}/K)^{op}$ implies that 
	\begin{equation}\label{eq:ipi}
		M(\pi_{\alpha})(\pt \sigma)= \pt \sigma'~~{\rm and}~~\pi_{\alpha}^{-1}(\varphi(\sigma'))= \varphi(\sigma)~~~.
	\end{equation}

	The category $I$ is co-filtered:  suppose we are given
	$$
	(f_1, \alpha_1),~(f_2,\alpha_2): \langle x_1, X_1, \sigma_1, B_1\rangle\rightrightarrows
	\langle x_2,X_2, \sigma_2, B_2\rangle
	$$
	and we want to get $(f, \alpha): \langle x', X', \sigma', B'\rangle\longrightarrow
	\langle x_1,X_1, \sigma_1, B_1\rangle$ equalizing these two arrows.
	Since $\int M$ is cofiltered, we can easily get $f$ equalizing $f_1,f_2$; to get $\alpha$, we consider the  equalizer of $\alpha_1, \alpha_2$ in $(\cSet_{fin})^{op}$. The tuple $\sigma'$ is supplied by the universal property of equalizers;  we finally put $B:= (f\times \pi_\alpha)^{-1}B_1$.
	Condition~\eqref{eq:i2} is trivial and condition~\eqref{eq:i1} follows from continuity and from the fact that $\alpha': \sigma'\longrightarrow \sigma_1$ is an arrow of  $(\cSet_{fin}/K)^{op}$, as noticed above in~\eqref{eq:ipi}.  
	The remaining cofiltering conditions for $I$ are checked in the same way.

	We have an obvious functor $\int M\longrightarrow I$ mapping $(x,X)$ to $(x,X, \emptyset, \top)$.
	We now need to define a pseudofunctor $D$ from $I^{op}$ to the 2-category of f-Boolean categories in such a way that the composition of $(\int M)^{op}\longrightarrow I^{op}$ with $D$ is precisely the diagram $D_M$ of $M$.
	Let $\langle x, X, \sigma, B\rangle$ be an object of $I$: recall that we have $B\wedge \pi^{-1}_{\dom(\sigma)}\varphi(\sigma)\hookrightarrow X\times \dom(\sigma)$. We stipulate that $D$ associate the slice category ${\cE}/(B\wedge \pi^{-1}_{\dom(\sigma)}\varphi(\sigma))$ with this object. Now take an arrow
	$(f, \alpha): \langle x_1, X_1, \sigma_1, B_1\rangle\longrightarrow
	\langle x_2,X_2, \sigma_2, B_2\rangle$
	and notice that from~\eqref{eq:i2} and~\eqref{eq:ipi} we can  obtain
	$$
	B \wedge \pi_{\dom(\sigma)}^{-1}(\varphi(\sigma)) \leq  (f\times \pi_{\alpha})^{-1}(B'\wedge \pi_{\dom(\sigma')}^{-1}(\varphi(\sigma'))
	$$
	This is equivalent to the fact that there is a (necessary unique) arrow making the square below commute:
	
	\begin{equation}\label{diag}
		\begin{tikzcd}
			B\land \pi^{-1}_{\dom(\sigma)}\varphi(\sigma) \ar[r," "] \ar[d," ",>->] & B'\land \pi^{-1}_{\dom(\sigma')}\varphi(\sigma') \ar[d," ",>->] \\
			X\times \dom(\sigma)\ar[r,"f\times \pi_{\alpha}"] & X'\times \dom(\sigma')
		\end{tikzcd}
	\end{equation}
	The pullback along this arrow will be the functor associated by $D$ to the arrow $(f,\alpha)$. This is clearly pseudo-functorial and, once composed with $(\int M)^{op}\longrightarrow I^{op}$, gives the diagram $D_M$ of the model $M$.
	
	It remains to check that the pseudocolimit $\cat{L}$ of the pseudofunctor $D$ is consistent (an f-Boolean category is consistent iff the Boolean algebra of the subobjects of the terminal object is not degenerate). Since the colimit is filtered, this amount to show that the f-Boolean category associated with each object $\langle x, X, \sigma, B\rangle$  of $I$ is consistent. Thus we need to check that for every object $\langle x, X, \sigma, B\rangle$ we do not have
	$B \land \pi_{\dom(\sigma)}^{-1}\varphi(\sigma) = \bot$. If this is the case (by absurd), then we have also
	$\bot = \Box B \land \Diamond \pi_{\dom(\sigma)}^{-1} \varphi(\sigma)$; by the product independence axiom, we would get
	$$
	\bot= \Box B \land \bigwedge_{i}\pi_{\dom(\sigma_i)}^{-1}\Diamond \varphi(\sigma_i)
	$$
	which is in contrast to~\eqref{eq:i1} and $\pt(\sigma_i)\vDash \Diamond \varphi(\sigma_i)$ (the latter comes from the fact that $\sigma_i\in K$).

	Since $\cat{L}$  is consistent, by G\"odel's completeness theorem there is a Boolean model ${\cat L}\longrightarrow
	\cSet$ whose restriction (induced by the composition with $(\int M)^{op}\longrightarrow I^{op}$)
	$$
	N:\cat{L}_M \longrightarrow {\cat L}\longrightarrow
	\cSet
	$$
	to the diagram $\cat{L}_M$ of $M$ classifies an extension  of $M$. This extension, call it $\mu:M\longrightarrow N$,  has the following properties by construction:
	\begin{itemize}
		\item for every $i\in K$ there is $\kappa_i\in N(\dom(i))$
		such that $\kappa_i\models^N \varphi(i)$ (as usual, we extend the notation 
		to the $\sigma \in (\cSet_{fin}/K)^{op}$ and let
		$\kappa(\sigma)$ be the tuple $\kappa(\sigma) = (\kappa_i\,|\,i\leq n) \in N(\dom(\sigma))$);
		\item for every $a\in M(X)$, for every $\sigma \in (\cSet_{fin}/K)^{op}$
		and $B\subseteq X\times \dom(\sigma)$, if 
		$(a, \pt \sigma) \models^M \Box B$ then $(\mu(a), \kappa(\sigma)) \models^N B$.
	\end{itemize}
	
	Armed with these data, we can define the desired product closed family of relations
	$R=\{R_Z \subseteq M(Z) \times N(Z)\}_Z$ as follows. We have that $(c,d)\in M(Z)\times N(Z)$  belongs to $R_Z$ iff there are $X, \sigma, a\in M(X)$
	such that $Z\simeq X\times \dom(\sigma)$,
	$c\simeq (a, \pt(\sigma))$ and $d\simeq (\mu_X(a), \kappa(\sigma))$. 
\end{proof}

\subsubsection{The proof of the main result}

We now continue our construction of topological models.

\begin{lemma}\label{lem:closure-relations}
	Let $M_1, M_2, \cdots$ be Boolean models of $\cE$. For each $i\geq 1$, let $R_i \subseteq M_i \times M_{i+1}$ be a relation stable under products and such that $R_i^{-1} \llbracket\varphi\rrbracket_{M_{i+1}} \subseteq \llbracket\Diamond\varphi\rrbracket_{M_i}$ for all $\varphi \subseteq X$ in $\cE$. Then there is a family of relations $(\ovl{R}_{ij} \subseteq M_i \times M_j)_{i<j}$ which satisfies:
	\begin{enumerate}[label=(L\arabic*)]
		\item $R_i \subseteq \ovl{R}_{i,i+1}$,\label{clR:1}
		\item $\ovl{R}_{ij} \ovl{R}_{jk} \subseteq \ovl{R}_{ik}$,\label{clR:2}
		\item $\ovl{R}_{ij}$ is stable under partial maps,\label{clR:3}
		\item $\ovl{R}_{ij}$ is stable under products,\label{clR:4}
		\item $\ovl{R}_{ij}^{-1} \llbracket\varphi\rrbracket_{M_j} \subseteq \llbracket\Diamond\varphi\rrbracket_{M_i}$ for all $\varphi \subseteq X$ in $\cE$.\label{clR:5}
	\end{enumerate}
\end{lemma}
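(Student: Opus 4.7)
I would define $\ovl{R}_{ij}$ as the smallest family of relations that contains each $R_i$ (for (L1)) and is closed under composition (L2), pushforward along partial maps (L3), and products (L4), so that (L1)--(L4) hold by construction and the substantive task is to verify (L5). A convenient way to reason is to describe each pair $(x,y) \in \ovl{R}_{ij}(Z)$ by a \emph{normal-form witness}: an auxiliary object $W \in \cE$, a partial map $f : W \relto Z$, and a chain $\ovl{x}_i, \ovl{x}_{i+1}, \ldots, \ovl{x}_j$ with $\ovl{x}_l \in M_l(W)$, $\ovl{x}_l\, R_l\, \ovl{x}_{l+1}$, and $\ovl{x}_i, \ovl{x}_j \in \dom(f)$ mapping to $x, y$ respectively.

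\textbf{Easy cases.} (L1) is witnessed by $W = X$, $f = \id$, and a chain of length one. Closure under partial maps (L3) simply composes $f$ with the new partial map. Closure under products (L4) replaces $(W, f)$ by $(W_1 \times W_2, f_1 \times f_2)$ and pairs the chains, using stability of each $R_l$ under products to keep the $R_l$-links. For (L5), given $y \vDash \varphi$ one has $\ovl{x}_j \vDash f^{-1}\varphi$; iterating the (L5) assumption for each $R_l$ along the chain and collapsing repeated modalities via the S4 axiom yields $\ovl{x}_i \vDash \Diamond f^{-1}\varphi$; the continuity axiom $f\Diamond f^{-1}\varphi \leq \Diamond\varphi$ for partial maps then concludes $x = f(\ovl{x}_i) \vDash \Diamond\varphi$.

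\textbf{Main obstacle.} The subtle point is transitivity (L2) and its interaction with (L4). Gluing two normal-form witnesses of $(x, z)$ and $(z, y)$ along an intermediate element $z$ at index $k$ calls for assembling the two chains on a common auxiliary object; the natural move is the fiber product $W_1 \times_Z W_2$ in $\cE$, which gives a matching element at index $k$, while the remaining indices must be filled compatibly, possibly via passing to a multi-segment form. The difficulty is that product closure applied to such composite derivations would seem to require (L5) for formulas $\theta \subseteq X_1 \times X_2$ that are not product-decomposable, for which the product independence axiom (PI) alone is insufficient. The resolution is the loop contraction axiom: the construction of $\ovl{R}_{ij}$ by rules (L1)--(L4) mirrors the inductive definition of acceptable loops by rules (A1)--(A5), with product closure corresponding to rule (A5); so (LC) applied to the acceptable loop associated with a product derivation delivers (L5) for the product pair without any decomposition hypothesis on $\theta$.
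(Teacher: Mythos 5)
Your definition of $\ovl{R}_{ij}$ and your final diagnosis --- that the generating rules \ref{clR:1}--\ref{clR:4} mirror the rules \ref{acc:1}--\ref{acc:5} for acceptable loops, so that \eqref{ax:LC} is what delivers \ref{clR:5} --- coincide with the paper's proof. But the argument you actually write down does not go through, and the gap sits precisely at the point you flag. Your ``normal-form witness'' (one auxiliary object $W$, one partial map $f : W\relto Z$, one $R$-chain inside $W$) is not stable under \ref{clR:2}: after gluing two witnesses along the intermediate element there is no single chain $\ovl{x}_i,\dots,\ovl{x}_k$ living in one object, and no fiber product repairs this, since the first witness provides no elements at indices above $j$ and the second none below $j$. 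The paper therefore works with a strictly more flexible invariant: to each derivation of $(a,b)\in\ovl{R}_{ij}$ it attaches a finite path of nodes $(k,Y,y)$ with two kinds of arrows --- $R_k$-steps, and relation-steps labelled by some $S$ with $y'\,S\,y$ --- subject only to the condition that the sequence of labels forms an acceptable loop. You never state this (or any) replacement invariant, so the induction is not actually carried out; in particular you miss the alignment issue in the product case: the two paths being producted generally have their $R_k$-steps at different positions, and one must first insert identity relations (rule \ref{acc:2}) to pad them into matching shape before \ref{acc:5} applies.

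The verification of \ref{clR:5} must also be redone for the general invariant, and your ``easy case'' computation does not survive the change. With several partial maps interleaved along the path one does not obtain $\Diamond\Diamond\cdots\Diamond f^{-1}\varphi$, which \eqref{ax:S4} would collapse; a backward induction along the path yields instead the interleaved expression $a\vDash \Diamond S_1\Diamond S_2\cdots\Diamond S_n\varphi$, where $S_1,\dots,S_n$ is the acceptable loop of labels, and it is exactly here --- not only ``for the product pair'' --- that the full loop contraction axiom is needed to conclude $a\vDash\Diamond\varphi$. So the missing content is: (i) the correct inductive invariant, (ii) the \ref{acc:2}-padding in the product case, and (iii) the backward induction producing the premise of \eqref{ax:LC}.
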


\begin{proof}
	We simply define $(\ovl{R}_{ij})_{i<j}$ as the closure of the $R_i$ under the rules \ref{clR:2}, \ref{clR:3}, \ref{clR:4}. We will use the loop contraction axiom \eqref{ax:LC} to show that \ref{clR:5} holds.
	
	Let $X \in \cE$, $a \in M_i(X)$ and $b \in M_j(X)$ such that $a \ovl{R}_{ij} b$. To each such pair $(a,b)$, we will associate a finite path whose nodes and arrows are labeled according to the rules below:
	\begin{itemize}
		\item Each node is labeled by a triple $(k,Y,y)$ where $i \leq k \leq j$, $Y \in \cE$ and $y \in M_k(Y)$.
		\item The first node is labeled by $(i,X,a)$ and the last node is labeled by $(j,X,b)$.
		\item There are two types of arrow:
		\begin{itemize}
			\item First type: $(k,Y,y) \longrightarrow (k+1,Y,y')$ with $y R_k y'$, labeled by $1_Y$.
			\item Second type: $(k,Y,y) \longrightarrow (k,Y',y')$ labeled by an $S \subseteq Y'\times Y$ such that $y' S y$ (here we simply write $S$ instead of $M_k(S)$).
		\end{itemize}
		\item The sequence of labels of the arrows forms an acceptable loop.
	\end{itemize}
	To define this path, we proceed by induction on the way that $(a,b) \in \ovl{R}_{ij}$ is constructed inductively following the rules \ref{clR:1}--\ref{clR:4}.
	\begin{itemize}
		\item For the base case \ref{clR:1}, we have $(a,b) \in \ovl{R}_i(X)$ for some $X \in \cE$ and the path simply consists of one arrow of the first type $(i,X,a) \longrightarrow (i+1,X,b)$. The associated loop is acceptable since it consists of just an identity.
		\item For \ref{clR:2}, we have $(a,b) \in \ovl{R}_{ij}(X)$ and $(b,c) \in \ovl{R}_{jk}(X)$. The last node of the path of $(a,b)$ has the same label as the first node of the path of $(b,c)$. We can thus concatenate these two paths by merging these nodes. The acceptable loops are stable by concatenation by \ref{acc:3}, hence the path is valid.
		\item For \ref{clR:3}, suppose that we have already associated a path to $(a,b) \in \ovl{R}_{ij}(X)$. Let $f : X\relto Y$ be a partial map such that $a$ and $b$ are both in the domain of $f$. We associate to $(f(a),f(b)) \in \ovl{R}_{ij}(Y)$ the path below, where the dashed arrow represents the path of $(a,b)$ and where the solid arrows represent new nodes of the second type.
		\[\begin{tikzcd}
			(i,Y,f(a)) \ar[r,"f"] & (i,X,a) \ar[r,dashed] & (j,X,b) \ar[r,"f^{-1}"] & (j,Y,f(b))
		\end{tikzcd}\]
		Thanks to \ref{acc:4}, the underlying loop is again acceptable.
		\item For \ref{clR:4}, suppose that we have associated a path to both $(a,b) \in \ovl{R}_{ij}(X)$ and $(a',b') \in \ovl{R}_{ij}(X)$. First of all, notice that we can duplicate a node in a path by adding an arrow of the second type labeled by the identity between the two copies, thanks to \ref{acc:2}. By adding such identities, it is possible to modify the paths of $(a,b)$ and $(a',b')$ so that the arrows of the first type become aligned (i.e., the two paths have the same lengths and the arrows with matching positions have the same type). Once this is done, we take the ``pointwise product'' of the two paths: Two nodes $(k,Y,y)$ and $(k,Y',y')$ produce the node $(k,Y\times Y',(y,y'))$; Two arrows of the first type produce an arrow of the first type, using the stability of the $R_i$ under products; Two arrows of the second type labeled by $S$ and $S'$ produce an arrow of the second type labeled by $S\times S'$. Thanks to the stability of acceptable loops under products asserted by \ref{acc:5}, the path obtained is valid.
	\end{itemize}

	In the last step of the proof, we use the path associated to $(a,b) \in \ovl{R}_{ij}(X)$ to show that if $b \vDash \varphi$ then $a \vDash \Diamond\varphi$ for all $\varphi \subseteq X$. This implies that $\ovl{R}_{ij}^{-1} \llbracket\varphi\rrbracket_{M_j} \subseteq \llbracket\Diamond\varphi\rrbracket_{M_i}$. Suppose that $b \vDash \varphi$. Given a node $(k,Y,y)$ in the path, we denote by $\Ss(k,Y,y)$ the operator 
    $\Diamond S_u \Diamond S_{u+1} \Diamond \cdots \Diamond S_v $
    where $S_u,{\ldots},S_v$ is the sequence of labels of the arrows found after $(k,Y,y)$. (Even though it is possible that two nodes have the same label, we denote a node by its label as it creates no confusion in the proof.) We will show that
	\[ y \vDash \Ss(k,Y,y) \varphi \]
	for every node $(k,Y,y)$. The proof is by induction on the position of the node, starting from the end.
	\begin{itemize}
		\item The last node is $(j,X,b)$ and we indeed have $b \vDash \varphi$ by assumption.
		\item Suppose that we encounter an arrow of the first type $(k,Y,y) \longrightarrow (k+1,Y,y')$ and we already know that $y' \vDash \Ss(k+1,Y,y') \varphi$. Then $y R_k y'$, so that $y \vDash \Diamond \Ss(k+1,Y,y') \varphi$ by the assumption on $R_k$. Since 
        $\Ss(k,Y,y) = \Diamond\Ss(k+1,Y,y')$, 
        we get $y \vDash \Ss(k,Y,y) \varphi$.
		\item Suppose that we encounter an arrow of the second type $(k,Y,y) \longrightarrow (k,Y',y')$ labeled by $S$. Suppose that $y' \vDash \Ss(k,Y',y') \varphi$. Since $y' S y$, we get $y \vDash S \Ss(k,Y',y') \varphi \leq \Diamond S \Ss(k,Y',y') \varphi$. Noting that $\Ss(k,Y,y) = \Diamond S \Ss(k,Y',y')$, we obtain $y \vDash \Ss(k,Y,y) \varphi$.
	\end{itemize}
	At the end of the induction, we get 
    $a \vDash \Ss(i,X,a) \varphi$.
    Given that the 
    whole sequence 
    forms 
    an acceptable loop, 
    we deduce from \eqref{ax:LC} that $a \vDash \Diamond \varphi$.
\end{proof}

We are finally ready to prove the main theorem.

\begin{theorem*}[\ref{thm:completeness-top}]
	A small modal category is topological if and only if it admits a conservative modal functor to a power of $\cTop$.
\end{theorem*}

\begin{proof}
	The right-to-left implication is Proposition~\ref{prop:top-soundness}. It remains to show that if $\cE$ is a small topological modal category, then the modal functors $\cE \to \cTop$ are jointly conservative.
	
	By a standard argument, since we are working with negations, it is enough to show that for each $U \neq \bot$, there is a topological model $M : \cE \to \cTop$ with $M(U) \neq \emptyset$. By Gödel's completeness theorem, we know that there is a Boolean model $M_0 : \cE \to \cSet$ with $M_0(U) \neq \emptyset$. We use Lemma~\ref{lem:extend-model} repeatedly to build a sequence $M_0 \to M_1 \to \cdots$ of Boolean models with relations $R_i \subseteq M_i\times M_{i+1}$. By Lemma~\ref{lem:closure-relations}, we can extend these relations to a family $(\ovl{R}_{ij} \subseteq M_i \times M_j)_{i<j}$ satisfying \ref{clR:1}--\ref{clR:5}.
	
	From this, we build a functor $\cE \to \cSetRel^\omega$. Each object $X \in \cE$ is sent to the sequence $M_0(X) \to M_1(X) \to \cdots$ with relations $\ovl{R}_{ij}(X) \subseteq M_i(X)\times M_j(X)$. This is a relational sequence: by \ref{clR:1}, each $\ovl{R}_{ij}(X)$ contains the graph of $M_i(X)\to M_j(X)$ and by \ref{clR:2}, $\ovl{R}_{ij} \ovl{R}_{jk} \subseteq \ovl{R}_{ik}$. Moreover, by \ref{clR:3} the relations $\ovl{R}_{ij}$ are stable under the morphisms of $\cE$ and this defines a functor $\cE \to \cSetRel^\omega$. Let us show that this functor preserves embeddings and finite limits.
	
	\proofstep{Embeddings} Let $f : Y\hookrightarrow X$ be an embedding in $\cE$. The inverse $f^{-1} : X\relto Y$ is a partial map and since $\ovl{R}_{ij}$ is stable under partial maps by \ref{clR:3}, $\ovl{R}_{ij}(Y)$ is the restriction of $\ovl{R}_{ij}(X) \subseteq M_i(X)\times M_j(X)$ to $M_i(Y)\times M_j(Y) \subseteq M_i(X)\times M_j(X)$. Thus $\cE \to \cSetRel^\omega$ preserves embeddings.
	
	\proofstep{Finite products} The $\ovl{R}_{ij}$ are stable under products by \ref{clR:4} and under the actions of morphisms by \ref{clR:3}, so that $\ovl{R}_{ij}(X\times Y) = \ovl{R}_{ij}(X) \times \ovl{R}_{ij}(Y)$. Thus $\cE \to \cSetRel^\omega$ preserves finite products.
	
	\proofstep{Equalizers} Since equalizers in $\cE$ are embeddings, they are sent to embeddings by $\cE \to \cSetRel^\omega$. Using that each $M_i({-})$ in the sequence preserves equalizers, and since the equalizers in $\cSetRel^\omega$ are the pointwise equalizers that are also embeddings, the functor $\cE \to \cSetRel^\omega$ preserves equalizers.
	
	Now, we compose $\cE \to \cSetRel^\omega$ with the functor $c : \cSetRel^\omega \to \cTop$ of \S~\ref{subsec:top-mod}. We obtain a functor $M : \cE \to \cTop$ which preserves embeddings and finite limits by Lemmas~\ref{lem:c-embed} and \ref{lem:c-is-lex}. It also preserves surjections and joins of subobjects: If $f : X\twoheadrightarrow Y$ is a surjection in $\cE$, then $M_i(X)\twoheadrightarrow M_i(Y)$ too for all $i$, hence the colimit is a surjection; If $X = \varphi\lor\psi$ in $\cE$, then similarly $M_i(X) = M_i(\varphi)\cup M_i(\psi)$ and this is also transferred to the colimit.
	
	It remains to show that $M : \cE \to \cTop$ preserves the modalities. Let $\varphi \subseteq X$ be a subobject in $\cE$. We wish to show that $M(\Diamond\varphi) = \ovl{M(\varphi)}$ in $M(X)$.
	
	We start with the inclusion $M(\Diamond\varphi) \subseteq \ovl{M(\varphi)}$. Let $[p] \in M(\Diamond\varphi) \subseteq M(X)$. In order to get $[p] \in \ovl{M(\varphi)}$, we need to show that $I_q \cap M(\varphi) \neq \emptyset$ for any $q \in M_i(\Diamond\varphi)$ with $[p] = [q]$. We have $q \in M_i(\Diamond\varphi) = R_i^{-1} M_{i+1}(\varphi) \subseteq \ovl{R}_{i,i+1}^{-1} M_{i+1}(\varphi)$. Hence there is $q' \in M_{i+1}(\varphi)$ such that $q \ovl{R} q'$, and $[q'] \in I_q \cap M(\varphi)$ as desired.
	
	Second, we prove that $\ovl{M(\varphi)} \subseteq M(\Diamond\varphi)$. Let $[p] \in \ovl{M(\varphi)}$ with $p \in M_i(X)$. In particular, $I_p \cap M(\varphi)$ contains some point $[p']$ with $p' \in M_j(\varphi)$ and $p \ovl{R}_{ij} p'$. Since $\ovl{R}_{ij}^{-1} M_j(\varphi) \subseteq M_i(\Diamond\varphi)$, we get $[p] \in M(\Diamond\varphi)$ as claimed.
	
	To conclude, we have shown that $M : \cE\to\cTop$ is a topological model. Moreover, since $M_0(U)\neq\emptyset$, the colimit $M(U) = \colim_i M_i(U)$ is nonempty.
\end{proof}

\section{Syntactic topological modal categories}
\label{sec:syntax-doctrines}

The purpose of this section is to explain how topological modal categories can be built concretely or, alternatively, how the axioms of topological modal categories translate in terms of a first-order calculus. Given a first-order modal theory $\TT$, we build its syntactic topological modal category $\Synt_\cSFour(\TT)$, whose category of topological models is equivalent to that of $\TT$. In order to formalize first-order modal theories, we use \emph{first-order doctrines}. Building a first-order doctrine out of a
syntactic calculus for a theory in a first order language is a standard construction (one takes terms as arrows in the base category $\cC$ and the functor $D$ from Definition~\ref{def:Booleandoctrine} below is obtained by taking equivalence classes of formulae under provable equivalence). In turn,
syntactic calculi for first order modal theories (with substitutions semi-commuting with modal operators) can be built for instance according to the guidelines of~\cite{GM2025} or of~\cite{handbook}. 

\subsection{Boolean doctrines}

We start by recalling the notion of a Boolean doctrine on a small cartesian category $\cC$. We refer the reader to \cite{PitCategoricalLogic2001} for more details.

Let \eqref{diag:BC} be a commutative square in the category of Boolean algebras and suppose that $f^*$ and $g^*$ have respective left adjoints $f_*$ and $g_*$. We say that \eqref{diag:BC} is a \emph{Beck--Chevalley} square if \eqref{diag:BC-} commutes.\\
\begin{minipage}{0.35\textwidth}
\begin{equation}\label{diag:BC}\begin{tikzcd}
		X \ar[r,"f^*"] \ar[d,"u^*"'] & Y \ar[d,"v^*"] \\
		Z \ar[r,"g^*"] & W
\end{tikzcd}\end{equation}
\end{minipage}\hfill\begin{minipage}{0.35\textwidth}
\begin{equation}\label{diag:BC-}\begin{tikzcd}
		X \ar[d,"u^*"'] & Y \ar[d,"v^*"] \ar[l,"f_*"'] \\
		Z & X \ar[l,"g_*"']
\end{tikzcd}\end{equation}
\end{minipage}

\begin{definition}\label{def:Booleandoctrine}
	A \emph{Boolean doctrine} on $\cC$ is a functor $D : \cC^\op \to \cBoolAlg$ such that:
	\begin{enumerate}
		\item Each $D(f)$ has a left adjoint $\exists_f$.
		\item Frobenius reciprocity holds: $\exists_f (D(f)(\varphi) \land \psi) = \varphi \land \exists_f \psi$.
		\item Every square in $\cC$ of the form \eqref{diag:BC-exists} or \eqref{diag:BC-equals} is sent by $D$ to a Beck--Chevalley square, where $\Delta_X$ denotes the diagonal $X\to X\times X$.\\
		\begin{minipage}{0.35\textwidth}
			\begin{equation}\label{diag:BC-exists}\begin{tikzcd}
					X\times Y \ar[r,"\pi_Y"] \ar[d,"X\times f"'] & Y \ar[d,"f"] \\
					X\times Z \ar[r,"\pi_Z"] & Z
			\end{tikzcd}\end{equation}
		\end{minipage}\hfill\begin{minipage}{0.4\textwidth}
			\begin{equation}\label{diag:BC-equals}\begin{tikzcd}
					X\times Y \ar[d,"X\times f"'] \ar[r,"\Delta_X\times Y"] & X\times X\times Y \ar[d,"X\times X\times f"] \\
					X\times Z \ar[r,"\Delta_X\times Z"] & X\times X\times Z
			\end{tikzcd}\end{equation}
		\end{minipage}
	\end{enumerate}
\end{definition}

\begin{example}
	The powerset functor $\Ps : \cSet^\op \to \cBoolAlg$ is a Boolean doctrine. More generally, for any f-Boolean category $\cE$, the functor $\Sub_\cE : \cE^\op \to \cBoolAlg$ is a Boolean doctrine \cite{GM2025}.
\end{example}

Boolean doctrines algebraize Boolean first-order logic with equality, whose syntax can thus be used to manipulate the elements of a Boolean doctrine. In fact, the reader unfamiliar with doctrines can simply consider that the elements of a Boolean doctrine $D$ are the formulas in a fixed first-order language modulo equivalence in a fixed theory, with the order of entailment. In the table below, we present three equivalent notation styles to manipulate the elements of a Boolean doctrine. We will mainly use the set-theoretical and the logical notations.

\begin{center}\begin{tabular}{c | c | c}
	\toprule
	Algebraic notation & Set-theoretical notation & Logical notation \\
	\midrule
	$\varphi \in D(X\times Y)$ & $\varphi \subseteq X\times Y$ & $\varphi(x,y)$ in context $x : X, y : Y$\\
	$\varphi \leq \psi$ & $\varphi \subseteq \psi$ & $\varphi(x) \vdash \psi(x)$\\
	$D(f)(\varphi)$ & $f^{-1}(\varphi)$ & $\varphi(f(x))$ \\
	$\exists_f(\varphi)$ & $f[\varphi]$ & $\exists x : f(x)=y \land \varphi(x)$\\
	$D(\Delta_X)(\varphi)$ & $\Delta_X^{-1}(\varphi)$ & $\varphi(x,x)$\\
	$\exists_{\pi_X}(\varphi)$ & $\pi_X[\varphi]$ & $\exists x : \varphi(x,y)$\\
	$\exists_{\Delta_X}(\top)$ & $\Delta_X[\top]$ & $x = y$\\
	\bottomrule
\end{tabular}\end{center}

\paragraph{Models} Let $D : \cC^\op \to \cBoolAlg$ be a Boolean doctrine and let $\cE$ be a Boolean category. A \emph{model} of $D$ in $\cE$ is a cartesian functor $F : \cC \to \cE$ equipped with a natural transformation $\alpha_X : D(X) \to \Sub_\cE(F(X))$ whose naturality squares are Beck--Chevalley. We write $\llbracket\varphi\rrbracket_F$ instead of $\alpha_X(\varphi)$ when $\varphi \in D(X)$, and we leave $\alpha$ implicit. A \emph{morphism} from a model $F$ to a model $G$ is a natural transformation $t : F\to G$ such that $t_X(\llbracket\varphi\rrbracket_F) \subseteq \llbracket\varphi\rrbracket_G$ for all $\varphi \in D(X)$. (In fact, due to the presence of complements, we even have $t_X(\llbracket\varphi\rrbracket_F) = \llbracket\varphi\rrbracket_G$.)

\paragraph{Partial maps} Given a Boolean doctrine $D : \cC^\op \to \cBoolAlg$, a \emph{partial map} from $X \in \cC$ to $Y \in \cC$ (relatively to $D$) is a relation $R \subseteq X\times Y$ internally satisfying the usual axiom $R(x,y) \land R(x,y') \vdash y=y'$. The partial maps from $X$ to $Y$ will be written as $f : X\relto Y$. The corresponding relation is written $f(x)=y$, but beware that it is only a notation: we are not using the equality predicate here. In some cases, it makes sense to use $f(x)$ as a term, for instance if $f$ is total. The domain $\dom(f)$ of $f$ is $[\exists y:f(x)=y] \in D(X)$. The image $\im(f)$ of $f$ is $[\exists x:f(x)=y] \in D(Y)$. Partial maps can be composed. A partial map $f : X\relto Y$ can be \emph{restricted} to a subdomain $\varphi \subseteq \dom(f)$ by defining $\restr{f}{\varphi}(x)=y \iff \varphi(x) \land (f(x)=y)$.

\paragraph{The syntactic Boolean category} Every Boolean doctrine $D : \cC^\op \to \cBoolAlg$ has a canonically associated Boolean category, called its \emph{syntactic category} and written $\Synt_\cBA(D)$. The morphisms from $\Synt_\cBA(D)$ to another Boolean category $\cE$ are equivalent to the models of $D$ in $\cE$. We assume known the construction of $\Synt_\cBA(D)$. Its objects are the elements $\varphi \in D(X)$ of $D$. The morphisms $\varphi\to\psi$, which we call the \emph{maps} from $\varphi$ to $\psi$, are the partial maps $f : X\relto Y$ with $\dom(f)=\varphi$ and $\im(f) \subseteq \psi$. 

\begin{remark}
	Every morphism $f : X\to Y$ in $\cC$ defines a morphism in $\Synt_\cBA(D)$, via its graph. However, it is possible that two distinct parallel morphisms in $\cC$ are sent to the same morphism in $\Synt_\cBA(D)$. This simply means that the theory classified by $D$ proves $\vdash f(x)=g(x)$.
\end{remark}

\begin{remark}
The construction $\Synt_\cBA(D)$ can also be used to convert an f-Boolean category into a Boolean category: recall indeed that an f-Boolean category $\cE$ can be seen as the  Boolean doctrine $(\cC, D)$, where $\cC$ is $\cE$ and $D$ is the subobject functor $\Sub_\cE$.
In this case $\Synt_\cBA(D)$ has the same objects as $\cE$ (since $\cE$ satisfies comprehension), however more isomorphisms arise. Take for instance a functional relation $R\buildrel{r}\over{\hookrightarrow} X\times Y$;
the composite arrow $R\buildrel{r}\over{\hookrightarrow} X\times Y\buildrel{\pi_X}\over \longrightarrow X$ might not be an iso in $\cE$ (by definition of a ``functional relation,'' it is only an injective surjection, i.e., a monomorphism which is also in the left class of the factorization system), but it becomes an iso when $\cE$ is embedded into the Boolean category $\Synt_\cBA(D)$.
\end{remark}

\subsection{Topological modal doctrines}

An S4 modal algebra is a modal algebra satisfying the \eqref{ax:S4} axiom $S \leq \Diamond S = \Diamond\Diamond S$. Recall that a lax morphism of S4 algebras is a Boolean algebra morphism $f : A \to B$ such that $\Diamond f(x) \leq f(\Diamond x)$. We denote by $\cIntAlg$ the category of S4 modal algebras and lax morphisms.

\begin{definition}\label{dfn:top-mod-doc}
	A \emph{topological modal doctrine} on a small cartesian category $\cC$ is a functor $D : \cC^\op \to \cIntAlg$ such that:
	\begin{enumerate}
		\item The composite with the forgetful functor $\cIntAlg \to \cBoolAlg$ is a Boolean doctrine.
        \item The axiom \eqref{ax:PI} holds for any two objects in $\cC$.
		\item There is a distinguished set $\Cc(D)$ of partial maps of the underlying Boolean doctrine. They are closed under compositions, products, and they contain all the maps from $\cC$.
		\item Similarly to \S~\ref{subsec:loop-contract}, we define a loop as a composable sequence of partial maps in $\Cc(D)$ such that the codomain of the first map is equal to the domain of the last map. The acceptable loops are defined by \ref{acc:1}--\ref{acc:5}, except that the partial map appearing in \ref{acc:4} is taken in $\Cc(D)$. We require that \eqref{ax:LC} holds for all the acceptable loops.
	\end{enumerate}
\end{definition}

\begin{remark}
	The arrows in $\Cc(D)$ are intended to represent ``partial continuous maps.''
	In order to express that some injection $f : X \to Y$ is an embedding, we add to $\Cc(D)$ the partial inverse function $Y \relto X$, with domain $f[X] \subseteq Y$.
	Just like in Remark~\ref{rmq:loop-contraction-trivializes}, the axiom \eqref{ax:LC} can be removed if $\Cc(D)$ contains only the maps from $\cC$.
\end{remark}

\paragraph{Models} The notion of model of a Boolean doctrine extends to topological modal doctrines. Let $D : \cC^\op \to \cIntAlg$ be a topological modal doctrine and let $\cE$ be a topological modal category. A \emph{model} of $D$ in $\cE$ is a cartesian functor $F : \cC \to \cE$ equipped with a natural transformation $\alpha_X : D(X) \to \Sub_\cE(F(X))$ such that:
\begin{itemize}
	\item Each $\alpha_X$ is a morphism of modal algebras and not a mere lax morphism, i.e., $\alpha_X(\Diamond\varphi) = \Diamond\alpha(\varphi)$.
	\item The naturality squares of $\alpha$ are Beck--Chevalley.
	\item Each partial map in $\Cc(D)$ is sent to the graph of a partial map in $\cE$.
     More explicitly, if $R \in D(X\times Y)$ is in $\Cc(D)$, then $\alpha(R) \subseteq F(X)\times F(Y)$ is the graph of a partial map $F(X)\relto F(Y)$ in $\cE$ (partial maps in a modal category $\cE$ were introduced in Subsection~\ref{subsec:modalcategopries}). When $\cE$ is the category of topological spaces, this means that each partial map in $\Cc(D)$ is interpreted as a continuous partial map in the usual sense.
\end{itemize}
We write $\llbracket\varphi\rrbracket_F$ instead of $\alpha_X(\varphi)$ when $\varphi \in D(X)$, and we leave $\alpha$ implicit. A \emph{morphism} from a model $F$ to a model $G$ is a natural transformation $t : F\to G$ such that $t_X(\llbracket\varphi\rrbracket_F) \subseteq \llbracket\varphi\rrbracket_G$ for all $\varphi \in D(X)$.

\subsection{The syntactic topological modal category}

In the remainder of the paper, we build the syntactic category of a topological modal doctrine. We show that it is a topological modal category, but we leave it to the reader to prove its universal property relatively to models in topological modal categories.

Let $D : \cC^\op \to \cIntAlg$ be a topological modal doctrine. It can be seen as a bare Boolean doctrine and thus $\Synt_\cBA(D)$ is a Boolean category whose morphisms are called \emph{maps}. A map is called \emph{continuous} if it is the restriction of a partial map in $\Cc(D)$.

\begin{lemma}
	Continuous maps are stable under restrictions, compositions and products.
\end{lemma}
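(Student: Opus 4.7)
The plan is to unpack the definition of a continuous map and, in each of the three cases, reduce the claim to the corresponding closure property of the class $\Cc(D)$, which is assumed to contain all arrows of $\cC$ and to be closed under composition and products. Recall that a continuous map is, by definition, of the form $\restr{g}{\varphi}$ with $g \in \Cc(D)$ and $\varphi \subseteq \dom(g)$, where $\restr{g}{\varphi}$ denotes the partial map whose graph is $\varphi(x) \wedge (g(x)=y)$.

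The restriction case is immediate: if $f = \restr{g}{\varphi}$ is continuous and $\psi \subseteq \dom(f) = \varphi$, then $\restr{f}{\psi} = \restr{g}{\psi}$ is again a restriction of $g \in \Cc(D)$, hence continuous.

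For composition, I would proceed as follows. Given continuous maps $f = \restr{g}{\varphi_1}$ from $X$ to $Y$ and $h = \restr{k}{\varphi_2}$ from $Y$ to $Z$, I would unwind the definition of composition of partial maps and check the equality
\[
h \circ f \;=\; \restr{(k \circ g)}{\chi}\text{,}
\]
where $\chi = \varphi_1 \wedge [\exists y : g(x)=y \wedge \varphi_2(y)]$, i.e.\ $\chi$ is $\varphi_1$ intersected with the reindexing of $\varphi_2$ along the partial map $g$. This identity is a straightforward unfolding of the graph of a composite partial map using the logical notation presented before Definition~\ref{dfn:top-mod-doc}. Since $\Cc(D)$ is closed under composition by assumption, $k \circ g$ lies in $\Cc(D)$, and $h \circ f$ is a restriction of this map, hence continuous. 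The main obstacle in the whole proof is in this step: one must keep track of the domain of the composite, because the domain of the right-hand partial map contributes to the domain of the composite only after being pulled back along $g$ (which is itself only partial). Once this bookkeeping is done, no additional axiom of doctrines is needed.

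For products, given continuous $f_i = \restr{g_i}{\varphi_i} : X_i \relto Y_i$ for $i=1,2$, I would check the analogous identity
\[
f_1 \times f_2 \;=\; \restr{(g_1 \times g_2)}{\pi_{X_1}^{-1}\varphi_1 \wedge \pi_{X_2}^{-1}\varphi_2}
\]
by a direct comparison of graphs, and then invoke the closure of $\Cc(D)$ under products to conclude. This gives the product closure and completes the lemma.
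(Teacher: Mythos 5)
Your proof is correct and follows essentially the same route as the paper: reduce each closure property to the corresponding closure property of $\Cc(D)$ via the identities $\restr{g}{\psi}\circ\restr{f}{\varphi}=\restr{(g\circ f)}{\varphi}$ and $\restr{f}{\varphi}\times\restr{f'}{\varphi'}=\restr{(f\times f')}{\varphi\times\varphi'}$. The only difference is that your domain $\chi$ in the composition step is more careful than necessary: since the maps being composed live in $\Synt_\cBA(D)$, the image of the first is contained in the domain object of the second, so $\chi$ collapses to $\varphi_1$, which is exactly the paper's formula.
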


\begin{proof}
	The stability under restrictions is obvious. If $\restr{f}{\varphi} : \varphi\to\psi$ and $\restr{g}{\psi} : \psi\to\theta$ are two maps obtained as the restrictions of $f : X\relto Y$ and $g : Y\relto Z$, then $\restr{g}{\psi} \circ \restr{f}{\varphi} = \restr{(g\circ f)}{\varphi}$. This shows the stability under composition. Let $\restr{f}{\varphi} : \varphi\to\psi$ and $\restr{f'}{\varphi'} : \varphi'\to\psi'$ be two maps obtained as the restrictions of $f : X\relto Y$ and $g : X'\relto Y'$. Then $\restr{f}{\varphi} \times \restr{f'}{\varphi'} = \restr{(f\times f')}{\varphi\times\varphi'}$. This shows the stability under products.
\end{proof}

The continuous maps thus form a wide subcategory of $\Synt_\cBA(D)$. We denote it by $\Synt_\cSFour(D)$ and we call it the \emph{syntactic category} of $D$.

\begin{lemma}
	$\Synt_\cSFour(D)$ inherits finite limits from $\Synt_\cBA(D)$.
\end{lemma}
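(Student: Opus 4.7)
The plan is to verify that the terminal object, binary products, and equalizers of the ambient Boolean category $\Synt_\cBA(D)$ lift to the wide subcategory $\Synt_\cSFour(D)$, in the sense that both the legs of each limit cone and the mediating arrow from any cone of continuous maps are continuous. Since $\Synt_\cSFour(D)$ is wide, this will ensure that all such finite limits exist in $\Synt_\cSFour(D)$ and are computed as in $\Synt_\cBA(D)$.

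The terminal object $\top \in D(1)$ has, as its unique arrow from $\varphi \in D(X)$, the restriction to $\varphi$ of the terminal projection $X \to 1$ in $\cC$, which lies in $\Cc(D)$ by clause~(3) of Definition~\ref{dfn:top-mod-doc}. For binary products, the product of $\varphi \in D(X)$ and $\psi \in D(Y)$ in $\Synt_\cBA(D)$ is $\pi_X^*\varphi \land \pi_Y^*\psi$, whose projections are restrictions of $\pi_X, \pi_Y \in \cC \subseteq \Cc(D)$. Given continuous maps $f : \theta \to \varphi$ and $g : \theta \to \psi$ with $\theta \in D(Z)$ and extensions $\tilde f, \tilde g \in \Cc(D)$, the pairing $\langle f, g \rangle$ is the restriction to $\theta$ of the composite $(\tilde f \times \tilde g) \circ \Delta_Z$. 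Since $\Cc(D)$ is closed under products and composition and contains the diagonal $\Delta_Z : Z \to Z\times Z$ coming from $\cC$, this composite belongs to $\Cc(D)$, so $\langle f, g \rangle$ is continuous.

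For equalizers, given parallel continuous maps $f, g : \varphi \rightrightarrows \psi$ extended from $\tilde f, \tilde g \in \Cc(D)$, their equalizer in $\Synt_\cBA(D)$ is a subobject $\theta \subseteq \varphi$ computed inside $D(X)$, with equalizer map given by the restriction to $\theta$ of $\id_X \in \cC \subseteq \Cc(D)$. The mediating map for a continuous cone $h : \alpha \to \varphi$ satisfying $f \circ h = g \circ h$ is $h$ itself viewed with codomain $\theta$; the underlying partial map is unchanged, so this mediating map is still a restriction of the same element of $\Cc(D)$ and thus continuous.

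The verification is essentially routine given the closure properties imposed on $\Cc(D)$ in Definition~\ref{dfn:top-mod-doc}. The one step that requires genuine care is the expression of the pairing as $(\tilde f \times \tilde g) \circ \Delta_Z$ restricted to $\theta$: this is the only place where a new element of $\Cc(D)$ must be assembled from existing ones, and it relies crucially on the stipulation that $\Cc(D)$ contains all morphisms of $\cC$ (so as to obtain $\Delta_Z$) together with its closure under products and composition.
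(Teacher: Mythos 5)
Your proof is correct and follows essentially the same route as the paper's: both identify the product and equalizer of $\Synt_\cBA(D)$, check the projections and the equalizer inclusion are restrictions of maps in $\Cc(D)$, express the pairing as (a restriction of) the product of the extensions composed with the diagonal, and observe that the equalizer's mediating map is the same underlying partial map with restricted codomain. The only cosmetic differences are that you treat the terminal object explicitly and assemble the relevant element of $\Cc(D)$ by hand where the paper invokes its preceding lemma on stability of continuous maps under restrictions, compositions and products.
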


\begin{proof}
	Let $\varphi \subseteq X$ and $\psi \subseteq Y$ be two objects of $\Synt_\cSFour(D)$. Their product $\varphi\times\psi$ in $\Synt_\cBA(D)$ is $\varphi(x) \land \psi(y) \subseteq X\times Y$. The projections $\varphi\times\psi \to \varphi$ and $\varphi\times\psi\to\psi$ are the restrictions of the projections $X\times Y\to X$ and $X\times Y\to Y$, hence they are continuous. The diagonals $\theta\to\theta\times\theta$ are also continuous, since they are the restrictions of diagonals. Given any two continuous maps $f:\theta\to\varphi$ and $g:\theta\to\psi$, the map $(f,g) : \theta\to\varphi\times\psi$ is the composition of the diagonal $\theta\to\theta\times\theta$ and $f\times g:\theta\times\theta\to\varphi\times\psi$, hence it is continuous. This shows that $\varphi\times\psi$ is also the product of $\varphi$ and $\psi$ in $\Synt_\cSFour(D)$.
	
	Let $f, g : \varphi\rightrightarrows\psi$ be two parallel continuous maps. Their equalizer $\eq(f,g)$ in $\Synt_\cBA(D)$ is $\varphi(x) \land (f(x)=g(x))$. The canonical inclusion $\eq(f,g) \hookrightarrow \varphi$ is continuous, as it is the restriction of the identity. Given a continuous map $h : \theta\to\varphi$ such that $hf = hg$, we obtain that $\im(h) \subseteq \eq(f,g)$, hence $h$ is also a continuous map $\theta\to\eq(f,g)$. This shows that $\eq(f,g)$ is also the equalizer of $f$ and $g$ in $\Synt_\cSFour(D)$.
\end{proof}

We now define the factorization system on $\Synt_\cSFour(D)$. The left class $\Ec$ is the class of continuous surjections. In other words, $\Ec$ is class of continuous maps which are regular epimorphisms in $\Synt_\cBA(D)$. The right class $\Mc$ is the isomorphism closure in $\Synt_\cSFour(D)$ of the maps which are the restriction of an identity of an object of $\cC$.

\begin{lemma}
	$(\Ec,\Mc)$ is an orthogonal factorization system on $\Synt_\cSFour(D)$.
\end{lemma}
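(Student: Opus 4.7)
The plan is to verify the two core conditions of an orthogonal factorization system: existence of $(\Ec,\Mc)$-factorizations and the unique diagonal fill-in property, using the fact that $\Synt_\cBA(D)$ is already a Boolean category and therefore comes with the usual (regular epi, mono) factorization system. Closure of both classes under composition and containment of isomorphisms will then follow by standard formal nonsense; for $\Ec$ it is also immediate since continuous maps compose (by the previous lemma) and regular epis in $\Synt_\cBA(D)$ compose.

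For the factorization, start with an arbitrary continuous map $f : \varphi \to \psi$, where $\varphi\subseteq X$ and $\psi\subseteq Y$. By hypothesis $f$ is the restriction of some partial map $g \in \Cc(D)$ with $g:X\relto Y$. Form the image $\im(f)\subseteq\psi$ in $D(Y)$, giving the standard factorization $\varphi \twoheadrightarrow \im(f) \hookrightarrow \psi$ in $\Synt_\cBA(D)$. The right-hand arrow is the restriction of $\id_Y$, so it lies in $\Mc$ by definition; the left-hand arrow has the same underlying partial map as $f$, so it is continuous, witnessed again by $g$, and being a regular epi in $\Synt_\cBA(D)$ it lies in $\Ec$.

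For orthogonality, consider a commutative square
\[
\begin{tikzcd}
\varphi \ar[r,"e",twoheadrightarrow] \ar[d,"u"'] & \varphi' \ar[d,"v"] \\
\psi \ar[r,"m"] & \psi'
\end{tikzcd}
\]
with $e\in\Ec$ and $m\in\Mc$. Using the isomorphism closure defining $\Mc$, we may conjugate by isos in $\Synt_\cSFour(D)$ to reduce to the case where $m$ is literally a restriction of identity, i.e., an inclusion of subobjects $\psi\hookrightarrow\psi'$ inside some object of $\cC$. Since $\Synt_\cBA(D)$ is Boolean and hence regular, the standard (regular epi, mono) orthogonality there yields a unique diagonal $d:\varphi'\to\psi$ in $\Synt_\cBA(D)$. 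Uniqueness in $\Synt_\cSFour(D)$ is automatic because a continuous filler is in particular a filler in $\Synt_\cBA(D)$. The only real content is to check that $d$ is continuous: from $m\circ d = v$, and because $m$ is an inclusion, the underlying partial map of $d$ is the same as that of $v$, so any partial map in $\Cc(D)$ witnessing continuity of $v$ witnesses continuity of $d$ as well.

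The potential obstacle is purely bookkeeping around the iso-closure in the definition of $\Mc$ and the fact that continuity is defined up to restriction rather than being an intrinsic property of a map in $\Synt_\cBA(D)$; once one is comfortable that ``same underlying partial map'' transfers continuity along inclusions of subobjects and along isomorphisms in $\Synt_\cSFour(D)$, both halves reduce to the already-established behavior of the epi/mono factorization system on the ambient Boolean category.
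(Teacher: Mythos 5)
Your proof is correct and follows essentially the same route as the paper's: factor through the image (the surjection is continuous because it has the same graph as $f$, the inclusion is a restriction of an identity), then reduce orthogonality to the case where the $\Mc$-map is literally a restriction of an identity, import the diagonal from $\Synt_\cBA(D)$, and observe it is continuous because it is a restriction of $v$. No gaps.
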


\begin{proof}
	Every continuous map $f : \varphi\to\psi$ can be factored as a surjection $\varphi\twoheadrightarrow\im(f)$ in $\Ec$ followed by the canonical inclusion $\im(f) \subseteq \psi$ in $\Mc$. Note that $\varphi\twoheadrightarrow\im(f)$ is continuous since it has the same graph as $f$. We need to check the orthogonality condition. Let $f : \varphi\twoheadrightarrow\psi$ be a surjection in $\Ec$, and let $i : \theta \hookrightarrow \xi$ be a continuous map in $\Mc$, which without loss of generality we suppose to be the restriction of an identity. Suppose we have a commutative square as below in $\Synt_\cSFour(D)$.
	\[\begin{tikzcd}
		\varphi \ar[r,"f",->>] \ar[d,"u"'] & \psi \ar[d,"v"]\\
		\theta \ar[r,hook,"i"] & \xi
	\end{tikzcd}\]
	By orthogonality in $\Synt_\cBA(D)$ there is a map $\psi\to\theta$ making the two triangles commute. This map is continuous since it is a restriction of $v$ which is continuous.
\end{proof}

\begin{lemma}
	$\Ec$ is stable under pullbacks and $\Mc$ contains all the regular monomorphisms.
\end{lemma}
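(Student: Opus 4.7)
My plan is to reduce both claims to properties of $\Synt_\cBA(D)$ combined with the explicit description of finite limits from the previous lemma, using crucially that every morphism of $\cC$ (in particular each identity and projection) belongs to $\Cc(D)$. Since pullbacks in $\Synt_\cSFour(D)$ are built from the products and equalizers of the previous lemma, which coincide with those of $\Synt_\cBA(D)$, the pullbacks themselves coincide in the two categories; so in each case I only need to check (i) the regular-epi property in $\Synt_\cBA(D)$ and (ii) the continuity of the relevant map.

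For stability of $\Ec$ under pullbacks, I would take $f : \varphi \twoheadrightarrow \psi$ in $\Ec$ and a continuous $g : \chi \to \psi$, with $\varphi \in D(X)$ and $\chi \in D(Y)$, and form the pullback $P \to \chi$ in $\Synt_\cSFour(D)$. Because this pullback coincides with the one in the Boolean (hence regular) category $\Synt_\cBA(D)$, where regular epis are stable under pullback, the map $P \to \chi$ is automatically a surjection. To verify continuity, I would factor it as the inclusion $P \hookrightarrow \varphi \times \chi$, which is a restriction of $\mathrm{id}_{X\times Y}$, followed by the projection $\varphi\times\chi \to \chi$, which is the restriction of the projection $X \times Y \to Y$ in $\Cc(D)$; both factors are continuous, so the composite is too.

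For $\Mc$ containing all regular monomorphisms, the plan is to observe that any regular mono $m : \theta \hookrightarrow \varphi$ is (up to canonical iso) the equalizer of two continuous maps $f, g : \varphi \rightrightarrows \psi$, which by the previous lemma is concretely described as the inclusion $\varphi(x) \wedge (f(x)=g(x)) \hookrightarrow \varphi$ inside some $D(X)$. This inclusion is by definition a restriction of $\mathrm{id}_X$ to a sub-object, hence lies in $\Mc$; since $\Mc$ is the iso-closure of such restrictions, so does $m$.

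I do not expect a real obstacle here: none of the topological axioms \eqref{ax:S4}, \eqref{ax:PI}, \eqref{ax:LC} come into play, and the argument is purely formal once the previous lemma is in hand. The only point requiring care is tracking the ambient category for each limit- or factorization-theoretic assertion, and remembering that ``continuous'' simply means ``restriction of a partial map in $\Cc(D)$,'' so that identities and projections of $\cC$ automatically give continuous maps after restriction.
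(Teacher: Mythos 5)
Your proposal is correct and follows essentially the same route as the paper: both parts reduce to the fact that finite limits in $\Synt_\cSFour(D)$ are inherited from the regular category $\Synt_\cBA(D)$, where surjections are pullback-stable, together with the explicit computation of equalizers as restrictions of identities. Your extra verification that the pullback projection is continuous is harmless but already guaranteed by the previous lemma, since the limit cone lives in $\Synt_\cSFour(D)$.
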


\begin{proof}
	That $\Ec$ is stable under pullbacks follows directly from the fact that finite limits are inherited from $\Synt_{\cBA}(D)$ and that surjections are stable under pullbacks in $\Synt_{\cBA}(D)$. By the computation of equalizers in $\Synt_{\cBA}(D)$, we see that they are indeed restrictions of identities, hence in $\Mc$.
\end{proof}

\begin{lemma}
	For any $\varphi \in D(X)$, the poset of $\Mc$-subobjects of $\varphi$ in $\Synt_\cSFour(D)$ is isomorphic to ${\downarrow}\varphi \subseteq D(X)$.
\end{lemma}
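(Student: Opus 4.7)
The plan is to exhibit an order-isomorphism
\[ \Phi : {\downarrow}\varphi \longrightarrow \{\Mc\text{-subobjects of }\varphi\} \]
by $\Phi(\psi) = [\iota_\psi]$, where $\iota_\psi := \restr{\id_X}{\psi} : \psi \to \varphi$ is the restriction of the identity on $X$ to $\psi$. This map is well-defined since $\iota_\psi$ lies in the generating class of $\Mc$ (it is a restriction of an identity, and continuous since every map of $\cC$ is in $\Cc(D)$).

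For order-reflection (and hence injectivity): suppose there is a continuous $u : \psi_1 \to \psi_2$ with $\iota_{\psi_2} \circ u = \iota_{\psi_1}$. Since $\iota_{\psi_2}$ is a monomorphism in $\Synt_\cBA(D)$, $u$ is uniquely determined there; a direct relational computation shows that its underlying partial map $X \relto X$ has graph $\psi_1(x) \wedge x=y$. For this to define a morphism into $\psi_2$ one needs $\psi_1 \leq \psi_2$. The converse direction is immediate: if $\psi_1 \leq \psi_2$, then $\restr{\id_X}{\psi_1}$ gives a continuous witness.

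For surjectivity, let $m : \theta \to \varphi$ be an arrow in $\Mc$. Unfolding the definition of $\Mc$ as the isomorphism closure of restrictions of identities, there exist $Y \in \cC$, $\xi \leq \eta$ in $D(Y)$, and isomorphisms $\alpha : \theta \to \xi$, $\beta : \varphi \to \eta$ in $\Synt_\cSFour(D)$ such that $\beta \circ m = \iota_\xi \circ \alpha$. I would set $\psi \leq \varphi$ to be the preimage of $\xi$ under the partial map $\beta$. Because $\beta$ is an iso in $\Synt_\cSFour(D)$, both $\beta$ and $\beta^{-1}$ are continuous, so the restrictions $\restr{\beta}{\psi} : \psi \to \xi$ and $\restr{\beta^{-1}}{\xi} : \xi \to \psi$ are continuous and mutually inverse, yielding an iso $\psi \cong \xi$ in $\Synt_\cSFour(D)$. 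Then $\sigma := (\restr{\beta}{\psi})^{-1} \circ \alpha : \theta \to \psi$ is an iso in $\Synt_\cSFour(D)$, and $\iota_\psi \circ \sigma = m$ follows by a short calculation from $\beta \circ m = \iota_\xi \circ \alpha$, giving $[m] = \Phi(\psi)$.

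The main subtlety is in the surjectivity step: one must exploit that $\beta$ is iso in the smaller category $\Synt_\cSFour(D)$, not merely in $\Synt_\cBA(D)$, so that $\beta^{-1}$ is already continuous and the restriction argument produces an iso $\psi \cong \xi$ in $\Synt_\cSFour(D)$. Once this is in place, the identification $\iota_\psi \circ \sigma = m$ is a formal manipulation of partial-map composition and preimages.
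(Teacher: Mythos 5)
Your proof is correct and takes essentially the same route as the paper's: the order-embedding is given by restrictions of identities, and surjectivity is obtained by pulling the generating restricted-identity subobject back along the codomain isomorphism (your $\beta$, the paper's $v$) and checking that the restricted isomorphism and its inverse remain continuous. The subtlety you single out --- that the inverse of $\beta$ is already continuous because $\Mc$ is defined as an isomorphism closure taken inside $\Synt_\cSFour(D)$ rather than $\Synt_\cBA(D)$ --- is exactly the point the paper's argument turns on as well.
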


\begin{proof}
	First of all, every $\psi \leq \varphi$ is an $\Mc$-subobject of $\varphi \in \Synt_{\cSFour}(D)$, the map $\psi\to\varphi$ being the restriction of the identity. If $\psi, \psi' \leq \varphi$ and if $\psi\to\varphi$ factors through $\psi'\to\varphi$ in $\Synt_{\cSFour}(D)$, then the same holds in $\Synt_{\cBA}(D)$ and thus $\psi\leq\psi'$. Consequently, ${\downarrow}\varphi$ embeds in the poset of $\Mc$-subobjects of $\varphi$. It remains to show that any $f : \psi\to\varphi$ in $\Mc$ is isomorphic to the restriction of an identity. If $f$ is in $\Mc$, it means that there are isomorphisms $u, v$ (continuous maps whose inverses are also continuous) making the following diagram commute, where $\varphi' \hookrightarrow \psi'$ is the restriction of an identity.
	\[\begin{tikzcd}
		\varphi \ar[r,"f",hook] \ar[d,"u"'] & \psi \ar[d,"v"]\\[-0.6em]
		\varphi' \ar[r,hook] & \psi'
	\end{tikzcd}\]
	We show that $f : \varphi\to\psi$ is isomorphic to the inclusion $v^{-1}(\varphi') \subseteq \psi$. Let $f' : \varphi \to v^{-1}(\varphi')$ be the restriction of $f$. It is continuous since $f$ is continuous. Let $v' : v^{-1}(\varphi') \to \varphi'$ be the restriction of $v$. It is continuous and has a continuous inverse. Then $v' \circ f' = u$ since $v\circ f = u$, and as a consequence $f'$ is the inverse of $u^{-1} \circ v'$.
\end{proof}

For any morphism $f : \varphi\to\psi$ in $\Synt_\cSFour(D)$, the map $f^{-1} : {\downarrow}\psi \to {\downarrow}\varphi$ is a morphism of Boolean algebras, by transferring that fact from $\Synt_\cBA(D)$. Thus we have shown:

\begin{proposition}
    $\Synt_\cSFour(D)$ is an f-Boolean category.
\end{proposition}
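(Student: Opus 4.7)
The plan is to verify each clause of the definition of an f-Boolean category by assembling the preceding lemmas; essentially all the work is already in place. First, the lex structure is inherited from $\Synt_\cBA(D)$ by the second lemma of the subsection, and the orthogonal factorization system $(\Ec,\Mc)$ is provided by the third. The fourth lemma supplies both the stability of $\Ec$ under pullbacks and the inclusion of regular monomorphisms in $\Mc$; combined with the fact that $\Mc$-maps are, up to isomorphism, restrictions of identities of objects of $\cC$ (hence monomorphisms in $\Synt_\cBA(D)$ and therefore in $\Synt_\cSFour(D)$), this establishes properness.

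The only clause not literally stated above is the stability of $\Mc$ under pullbacks. This is automatic for any proper factorization system whose right class is contained in monomorphisms; concretely, given a continuous map $f : \varphi\to\psi$ and an $\Mc$-subobject $\chi \subseteq \psi$, the pullback $f^{-1}(\chi)$ is computed as in $\Synt_\cBA(D)$ and is again (up to iso) a restriction of an identity, hence in $\Mc$.

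For the Boolean algebra structure, the order isomorphism $\Sub_{\Synt_\cSFour(D)}(\varphi) \cong {\downarrow}\varphi \subseteq D(X)$ established in the penultimate lemma transfers the Boolean structure of $D(X)$ to the subobject lattice. The required preservation of this structure by $f^*$ for $f \in \Ec$ is a special case of the stronger fact stated immediately before the proposition, namely that $f^*$ is a Boolean algebra homomorphism for \emph{every} morphism of $\Synt_\cSFour(D)$ (transferred from the analogous property of $\Synt_\cBA(D)$). No real obstacle arises: the proposition is essentially a bookkeeping summary of the work already distributed among the preceding lemmas.
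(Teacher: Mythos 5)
Your proposal is correct and follows essentially the same route as the paper, which states the proposition as a direct summary of the preceding lemmas (lex structure, the $(\Ec,\Mc)$ factorization system, stability of $\Ec$ and properness, the identification of $\Sub(\varphi)$ with ${\downarrow}\varphi$, and the fact that $f^{-1}$ is a Boolean algebra morphism transferred from $\Synt_\cBA(D)$). The two points you make explicit that the paper leaves implicit --- that $\Mc$ consists of monomorphisms and that its pullback-stability is automatic --- are both already acknowledged in the paper's definition of f-Boolean category, so no new content is needed.
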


We will now equip $\Synt_\cSFour(D)$ with a structure of topological modal category. To avoid confusion, we write $\Diamond_X$ for the modality of $D(X)$. Given $\varphi \in D(X)$, we define a structure of S4 modal algebra on ${\downarrow}\varphi$ by $\Diamond_\varphi \psi = \varphi \land \Diamond_X \psi$. It is indeed an S4 modality:
\begin{itemize}
	\item $\Diamond_\varphi(\psi\lor\psi') = \varphi \land \Diamond_X (\psi \lor \psi') = \varphi \land [(\Diamond_X \psi) \lor (\Diamond_X \psi')] = (\Diamond_\varphi \psi) \lor (\Diamond_\varphi \psi')$
	\item If $\psi \leq \varphi$ then $\psi \leq \Diamond_X \psi$ and hence $\psi \leq \varphi \land \Diamond_X \psi = \Diamond_\varphi \psi$.
	\item $\varphi \land \Diamond_X (\varphi \land \Diamond_X \psi) \leq \varphi \land \Diamond_X \Diamond_X \psi = \varphi \land \Diamond_X \psi$
\end{itemize}

We conclude with the main result of this section.

\begin{proposition}
    $\Synt_\cSFour(D)$ is a topological modal category.
\end{proposition}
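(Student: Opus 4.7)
The plan is to verify the axioms of a topological modal category for $\Synt_\cSFour(D)$. The f-Boolean structure is in place and each fiber $\Sub(\varphi)\cong{\downarrow}\varphi\subseteq D(X)$ has been shown to carry the S4 modality $\Diamond_\varphi\psi=\varphi\land\Diamond_X\psi$. What remains is to check: (i) laxness of $f^*$ for every morphism $f$ of $\Synt_\cSFour(D)$; (ii) the embedding axiom $\Diamond A=m^*\Diamond\exists_m A$; (iii) product independence \eqref{ax:PI}; (iv) loop contraction \eqref{ax:LC} for all acceptable loops in $\Synt_\cSFour(D)$.

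Conditions (i) and (ii) are computational. For (ii), any $\Mc$-embedding $m:\varphi\hookrightarrow\psi$ is, up to iso, a restriction of an identity of some $X\in\cC$ with $\varphi\leq\psi$ in $D(X)$, so $\exists_m A=A$ for $A\leq\varphi$ and $m^*\Diamond_\psi\exists_m A=\varphi\land\psi\land\Diamond_X A=\Diamond_\varphi A$. For (i), using the adjunction $\exists_f\dashv f^*$ available in any f-Boolean category, the lax property is equivalent to the image-side continuity $\exists_f\Diamond_\varphi B\leq\Diamond_\psi\exists_f B$ for $B\leq\varphi$. Writing $f$ as the restriction of some $\tilde f\in\Cc(D)$, one checks $\exists_f B=\tilde f B$; since $B\leq\varphi\leq\dom(\tilde f)$ implies $\tilde f^{-1}\tilde f B\geq B$, applying the partial-map continuity $\tilde f\,\Diamond\tilde f^{-1}A\leq\Diamond A$ to $A=\tilde f B$ yields $\tilde f\Diamond_X B\leq\Diamond_Y\tilde f B$, which suffices. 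The partial-map continuity is itself the instance of \eqref{ax:LC} for the acceptable loop $\tilde f,\tilde f^{-1}$ (rule \ref{acc:4} with empty middle), assumed in Definition~\ref{dfn:top-mod-doc}.

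For (iii), unfolding with $A\leq\varphi\subseteq X$, $B\leq\psi\subseteq Y$ and using $\Diamond_\varphi(\cdot)=\varphi\land\Diamond_X(\cdot)$ transforms the two sides of \eqref{ax:PI} in $\Synt_\cSFour(D)$ into $\varphi(x)\land\psi(y)\land\Diamond_X A\land\Diamond_Y B$ and $\varphi(x)\land\psi(y)\land\Diamond_{X\times Y}(A\land B)$ respectively. The required inequality is then \eqref{ax:PI} in $D$ conjoined with $\varphi\land\psi$.

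The main obstacle is (iv). The strategy is to lift an acceptable loop in $\Synt_\cSFour(D)$ to one in the ambient doctrine and apply \eqref{ax:LC} there. First, by induction on \ref{acc:1}--\ref{acc:5}, every acceptable loop $R_1,\ldots,R_n$ in $\Synt_\cSFour(D)$ with anchor $\varphi\subseteq X$ admits a lift to an acceptable loop $\tilde R_1,\ldots,\tilde R_n$ in the doctrine with anchor $X$: identities, concatenations and products lift directly using that $\Cc(D)$ is closed under composition and product (Definition~\ref{dfn:top-mod-doc}), while for \ref{acc:4} a continuous partial map in $\Synt_\cSFour(D)$ is by definition a restriction of some $\tilde f\in\Cc(D)$. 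Second, writing $\varphi_k\subseteq X_k$ for the intermediate objects along the loop, an induction from the inside out shows
\[\Diamond_{\varphi_k}R_k\cdots\Diamond_{\varphi_n}R_n A\ \leq\ \varphi_k\land\Diamond_{X_k}\tilde R_k\cdots\Diamond_{X_n}\tilde R_n A\text{.}\]
The key ingredient is the Frobenius identity $R_k B=\varphi_k\land\tilde R_k B$ for $B\leq\varphi_{k+1}$: the source restriction $\varphi_k$ factors out of the projection and the target restriction is absorbed by $B\leq\varphi_{k+1}$. Combined with $\Diamond_{\varphi_k}=\varphi_k\land\Diamond_{X_k}$ and monotonicity, this drives the induction. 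Applying \eqref{ax:LC} in $D$ to the lifted loop yields $\Diamond\tilde R_1\cdots\Diamond\tilde R_n A\leq\Diamond A$, hence the final bound $\varphi\land\Diamond_X A=\Diamond_\varphi A$, as required. The delicate point is verifying stability of the lift under \ref{acc:5}, since products of restricted relations must coincide (after the appropriate identifications) with restrictions of products of the lifts.
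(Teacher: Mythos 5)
Your proof is correct and follows essentially the same route as the paper: each axiom of a topological modal category is reduced to the corresponding axiom of the ambient doctrine by writing morphisms of $\Synt_\cSFour(D)$ as restrictions of partial maps in $\Cc(D)$ and conjoining with the fiber $\varphi$, exactly as in the paper's verification of laxness, \eqref{ax:PI} and \eqref{ax:LC}. The one place you make things harder than necessary is the ``delicate point'' you flag for \ref{acc:5}: the lifted loop need not \emph{coincide} with anything --- it suffices to produce an acceptable loop $\tilde R_1,\ldots,\tilde R_n$ built from $\Cc(D)$ with $R_i \leq \tilde R_i$ componentwise, after which monotonicity of direct images and of $\Diamond$ closes the induction, which is how the paper sidesteps the issue.
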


\begin{proof}
By the discussion above, $\Synt_\cSFour(D)$ is an f-Boolean category and each lattice of subobjects has a structure of S4 modal algebra.

We check that for any continuous map $f : \varphi\to\psi$, the map $f^{-1} : {\downarrow}\psi \to {\downarrow}\varphi$ is a lax morphism with respect to the modalities defined. We know that $f$ is the restriction of some $\tilde{f} : X\relto Y$ in $\Cc(D)$. By \eqref{ax:LC}, we have $\tilde{f} \Diamond_X \tilde{f}^{-1} \theta \leq \Diamond_Y \theta$ for all $\theta \subseteq Y$. If $\theta \subseteq \psi$ then $f \Diamond_\varphi f^{-1} \theta \leq \tilde{f} \Diamond_X \tilde{f}^{-1} \theta \leq \Diamond_Y \theta$ and moreover $f \Diamond_\varphi f^{-1} \theta \leq \psi$. This shows that $f \Diamond_\varphi f^{-1} \theta \leq \Diamond_\psi \theta$. By using the adjunction between $f[{-}]$ and $f^{-1}({-})$, we obtain that $f^{-1}$ is a lax morphism.

\paragraph{The axiom \eqref{ax:PI}} Let $\varphi \subseteq X$ and $\psi \subseteq Y$. Let $p : X\times Y\to X$ and $q : X\times Y\to Y$ be the two projections. Let $\tilde{p} : \varphi\times\psi \to \varphi$ and $\tilde{q} : \varphi\times\psi \to \psi$ be the restrictions of $p$ and $q$. Let $\theta \leq \varphi$ and $\xi \leq \psi$. Then
\begin{align*}
\tilde{p}^{-1}(\Diamond_\varphi \theta) \land \tilde{q}^{-1}(\Diamond_\psi \xi) &\leq p^{-1}(\Diamond_X \theta) \land q^{-1}(\Diamond_Y \xi)\\
&\leq \Diamond_{X\times Y} (p^{-1} \theta \land q^{-1} \xi)\text{.}
\end{align*}
Note that $p^{-1} \theta \leq \varphi\times Y$ and $q^{-1} \xi \leq X\times\psi$, so that $p^{-1} \theta \land q^{-1} \xi = \tilde{p}^{-1} \theta \land \tilde{q}^{-1} \xi$. Since also $\tilde{p}^{-1}(\Diamond_\varphi \theta) \land \tilde{q}^{-1}(\Diamond_\psi \xi) \leq \varphi\times\psi$, we get
\[ \tilde{p}^{-1}(\Diamond_\varphi \theta) \land \tilde{q}^{-1}(\Diamond_\psi \xi) \leq \Diamond_{\varphi\times\psi} (\tilde{p}^{-1} \theta \land \tilde{q}^{-1} \xi) \text{.} \]

\paragraph{The axiom \eqref{ax:LC}} We start by proving that \eqref{ax:LC} is satisfied for the acceptable loops whose construction involves only objects in $\cC$. Let $R_1,{\ldots},R_n$ be such a loop. Each application of \ref{acc:4} in the construction of $R_1,{\ldots},R_n$ uses a continuous partial map $f$, which is thus the restriction of some partial map $f'$ in $\Cc(D)$. If at each application of \ref{acc:4}, we replace $f$ by $f'$, then we obtain another acceptable loop $R'_1,{\ldots},R'_n$ such that $R_i \leq R'_i$ for each $i$. Since $R'_1,{\ldots},R'_n$ involves only partial maps from $\Cc(D)$, it satisfies \eqref{ax:LC} and we get
\[ \Diamond R_1 \cdots \Diamond R_n A \leq \Diamond R'_1 \cdots \Diamond R'_n A \leq \Diamond A \text{.} \]

Before showing that \eqref{ax:LC} holds for all the acceptable loops, we note that for every $\varphi \subseteq X$ and every $\psi \subseteq Y$, every partial map $f : \varphi\relto\psi$ in $\Synt_\cSFour(D)$ is the restriction of a partial map $X\relto Y$ in $\Cc(D)$. Indeed $f$ is a continuous map $\varphi' \to \psi$ with $\varphi' \leq \varphi$, and $\varphi'\to\psi$ is the restriction of a partial map in $\Cc(D)$. Let $R_1,{\ldots},R_n$ be an arbitrary acceptable loop in $\Synt_\cSFour(D)$. Let $\varphi(0), \varphi(1), {\ldots}, \varphi(n)$ be the associated sequence of domains and codomains, with $\varphi(0) = \varphi(n)$. For each $i$, we have some $X(i) \in \cC$ with $\varphi(i) \subseteq X(i)$. We can treat instead each $R_i$ as a relation $X(i) \relto X(i-1)$, and this produces another acceptable loop which satisfies \eqref{ax:LC}, as shown above. We thus have:
\[ \Diamond_{\varphi(0)} R_1 \cdots \Diamond_{\varphi(n-1)} R_n A \leq \Diamond_{X(0)} R_1 \cdots \Diamond_{X(n-1)} R_n A \leq \Diamond_{X(0)} A \text{.} \]
Since moreover $\Diamond_{\varphi(0)} R_1 \cdots \Diamond_{\varphi(n-1)} R_n A \leq \varphi(0)$, we obtain the desired inequality \eqref{ax:LC}.

We have checked that $\Synt_\cSFour(D)$ satisfies all the axioms of topological modal categories.
\end{proof}

\section{Conclusion}

In this paper we found necessary and sufficient conditions for a logical category endowed with interior and closure operators to be embeddable into a power of $\cTop$. The conditions we found (product independence and loop contraction principles)  derive from the definition of the product topology and its interaction with composition of continuous partial functions. The completeness proof goes through a construction of topological spaces obtained from lax successions of parallel pairs formed by relations and functions. Our results are meant to extend to first order logic classical results 
\cite{MckinseyTarski} 
connecting modal logic and topology.

Future work could be devoted to a better inspection of the definability power of the above formalism. Whereas in the propositional case modal logic cannot distinguish relevant classes of topological spaces from generic topological spaces, here the situation looks very different: for instance the $T_2$ separation axiom is expressible (just say that the diagonal is closed), Alexandroff spaces can be distinguished from arbitrary topological spaces~\cite{GhiModalitaCategorie1990}, compact Hausdorff spaces also exhibit interesting logical behaviors (e.g., continuous maps between them are closed), etc. The impression is that topological categories are a very rich framework to explore.

From another point of view, since our completeness theorem involves arbitrary theories, it makes sense to investigate the model theory of structures like for instance topological groups, rings, etc.: this might pave the way to  new research opportunities. 

\AtNextBibliography{\small}
{\printbibliography[
heading=bibintoc,
title={References}
]}

\end{document}